\numberwithin{equation}{section}
\newtheorem{theorem}{Theorem}[section]
\newtheorem{lemma}[theorem]{Lemma}
\newtheorem{proposition}[theorem]{Proposition}
\newtheorem{corollary}[theorem]{Corollary}
\theoremstyle{definition}
\newtheorem{remark}[theorem]{Remark}
\newcommand{\R}{{\mathbb{R}}}
\newcommand{\C}{{\mathbb{C}}}
\newcommand{\Z}{{\mathbb{Z}}}
\newcommand{\N}{{\mathbb{N}}}
\newcommand{\<}{{\langle}}
\renewcommand{\>}{{\rangle}}
\newcommand{\tens}{\otimes}
\newcommand{\extd}{{\rm d}}
\newcommand{\eps}{\epsilon}
\newcommand{\id}{{\rm id}}
\begin{document}

\title[Quantum geometry of the discrete interval and $q$-deformation]{Quantum Riemannian geometry of the discrete interval and $q$-deformation}
\keywords{noncommutative geometry, quantum gravity, discrete gravity, Lie algebras, Dynkin diagram, representation theory}

\subjclass[2020]{81R50, 46L87, 83C65, 58B32}

\author{J. N. Argota-Quiroz and S. Majid}\thanks{{\it Authors to whom correspondence should be addressed:} j.n.argotaquiroz@qmul.ac.uk and s.majid@qmul.ac.uk}
\date{Revised  24 February 2023}
\maketitle

{\ }\vspace{-1.5cm}
\begin{center}\small
School of Mathematical Sciences,\\
 Queen Mary University of London,\\
327 Mile End Rd, London E1 4NS, UK
\end{center}
\bigskip

\begin{quote}\small {\rm ABSTRACT}  We solve for quantum Riemannian geometries on the finite lattice interval  $\bullet-\bullet-\cdots-\bullet$ with $n$ nodes (the Dynkin graph of type $A_n$) and find that they are necessarily $q$-deformed with $q=e^{\imath\pi\over n+1}$. This comes out of the intrinsic geometry and not by assuming any quantum group in the picture.  Specifically, we discover a novel `boundary effect'  whereby, in order to admit a quantum-Levi Civita connection, the `metric weight' at any edge is forced to be greater pointing towards the bulk compared to towards the boundary, with ratio given by $(i+1)_q/(i)_q$ at node $i$, where $(i)_q$ is a $q$-integer.  The Christoffel symbols are also q-deformed. The limit $q\to 1$ likewise forces the quantum Riemannian geometry of the natural numbers $\N$ to have rational metric multiples $(i+1)/i$ in the direction of increasing $i$. In both cases, there is a unique Ricci-scalar flat metric up to normalisation. Elements of quantum field theory and quantum gravity are exhibited for $n=3$ and for the continuum limit of the geometry of $\N$. The Laplacian for the scalar-flat metric becomes the Airy equation operator ${1\over x}{\extd^2\over\extd x^2}$ in so far as a limit exists. Scaling this metric by a conformal factor $e^{\psi(i)}$ gives a limiting Ricci scalar curvature proportional to ${e^{-\psi}\over x}{\extd^2  \psi\over\extd x^2}$. \end{quote}

\section{Introduction}

The idea that spacetime coordinates are better modelled as a noncommutative algebra due to quantum gravity effects has gained traction in recent years as the `quantum spacetime hypothesis'. While speculated on since the early days of quantum mechanics\cite{Sny}, the proper study of this idea only became possible with the arrival of the mathematics of noncommutative geometry and related quantum group symmetries in the 1980s and 1990s. Models connecting these with the Planck scale (actually at the deformed phase space level) appeared in \cite{Ma:pla}, while deformed Minkowski space itself with quantum Poincar\'e group symmetry appeared in \cite{MaRue}, for a quantum group which had been proposed in \cite{Luk}. Other proposals from other contexts included \cite{Hoo,DFR}. These various  models were flat but over the last 30 years there has emerged a  constructive formalism of quantum Riemannian geometry (QRG), see \cite{BegMa} and references therein, which now allows the systematic construction of curved examples. This approach starts with an algebra $\Omega$ of differential forms over the coordinate algebra $A$, formulates a metric as $g\in \Omega^1\tens_A\Omega^1$ and a quantum Levi-Civita connection $\nabla:\Omega^1\to \Omega^1\tens_A\Omega^1$ obeying certain properties\cite{BegMa:gra}.

Remarkably, this theory produces nontrivial results even when $A$ is finite dimensional, for example functions on a finite graph\cite{Ma:gra}. This provides a systematic route to geometry on a finite lattice not as an approximation but as an exact quantum geometry within a single formalism that includes such models at one end and classical GR at the other. In the graph case, functions commute amongst themselves but do not commute with differentials. First quantum gravity models on finite graphs appeared in \cite{Ma:sq,ArgMa1} and the present paper is now third in that particular sequence. Quantum gravity on fuzzy spheres is another effectively finite model\cite{LirMa}. The QRG approach to quantum gravity cannot directly be compared with other approaches such as loop  quantum cosmology\cite{Ash}, dynamical triangulations\cite{Loll} and causal sets\cite{Dow} due to different methods, but some aspects could eventually connect up.  Also note that our conception of QRG is very different from Connes' spectral triple approach which encodes the noncommutative geometry as an axiomatically defined `Dirac operator'\cite{Con}, but the two approaches are not incompatible. Finite models in Connes approach were applied to quantum gravity in \cite{Hal} and have also been applied to the standard model of particle physics\cite{ConMar}.

\begin{figure}
\[\includegraphics[scale=0.9]{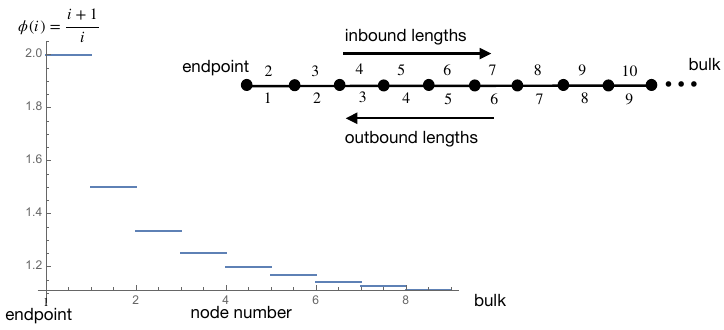}\]
\caption{\label{endpoint} The direction coefficient $\phi(i)={i+1\over i}$ at node $i$ on the half-line $\N$. Metrics that  admit a QRG have an arbitrary real number at each edge but in the ratio shown for the inbound direction / outbound direction. Eg at the first link the inbound length is twice the outbound, at the second the ratio is 3:2, etc. The ratio tends rapidly to 1 as we enter the bulk showing that this is an effect due to the endpoint boundary.}
\end{figure}

Specifically, in this paper we explore the quantum Riemannian geometry of the finite line graph $\bullet-\bullet-\cdots-\bullet$ with $n$ nodes (the Dynkin graph of type $A_n$) as well as the half-line with nodes the natural numbers $\N$. This turns out to be an order of magnitude harder than the case a closed $n$-gon $\Z_n$ or the integers $\Z$ solved in \cite{ArgMa1,Ma:haw} respectively. The complication comes from the boundary, i.e. the endpoints, and we discover a remarkable and unexpected new effect. Namely, it has been pointed out\cite{Ma:gra} that there is nothing in the formalism that says that the length of an edge has to be the direction-independent. A graph metric in QRG is just a nonzero real number assigned to every {\em arrow} with one arrow in each direction for every `link' or graph-edge. To keep things simple and to conform to physical intuition, one usually insists on the metric being edge-symmetric so that these two directions have the same value. The more general case can certainly be considered e.g. the polygon case with asymmetric metrics was recently solved in \cite{Sit} as a generalisation of \cite{ArgMa1}, but there is no particular reason to do so. Our new and rather surprising result is that for the $A_n$ graph with $n>2$ and for $\N$ {\em there is no edge-symmetric QRG}. We are forced to introduce a `direction coefficient' $\phi$ on edges to measure the ratio of the inbound arrow (towards the bulk) metric value compared to the outbound arrow one and find for $\N$ that these have to be specific rational numbers as shown in Figure~\ref{endpoint} in order to admit a quantum-Levi Civita connection. These ratios decay rapidly from 2 at the endpoint down to 1 in the bulk. As long as we keep to these ratios, we are free to vary the actual metric coefficients  or `square-lengths' as we please, so the moduli of QRGs is the same as classically -- a single `square-length' on every link -- but the new effect is that if we consider this as the outbound one then the inbound one is a multiple $\phi$ of it, namely twice at the first link at the left end, 3/2 at the link which is one in from the end, etc.  The metric coefficients have units of length squared as explained in \cite{Ma:sq,BegMa}.

The situation for $A_n$ is similar and we again find a canonical choice of quantum-Levi Civita connection provided the $\phi_i$ at edges $i=0,\cdots,n-1$ are now given by $q$-integers
\[ \phi_i= {(i+1)_q\over (i)_q},\quad q=e^{\imath\pi\over n+1},\quad (i)_q={q^i-q^{-i}\over q-q^{-1}}\]
deforming the canonical QRG for $\N$.  This is the second and equally unexpected discovery of our analysis, that a finite-lattice interval is intrinsically $q$-deformed in its quantum Riemannian geometry, without a quantum group in sight. The fact that $A_n$ is also the Dynkin graph for $SU_{n+1}$ suggests that there could be a role for $u_q(su_{n+1})$ at the specified root of unity, possibly as some kind of diffeomorphism quantum group, but this remains to be established.

An outline of the paper is as follows. We start with a recap of the formalism in Section~\ref{secpre}, including and choice of differential forms on the graph, in Section~\ref{secext}. In fact, the canonical choice of $\Omega$ for us will be $\Omega_{min}$ defined on any graph\cite{BegMa}.  In our case, it is a certain quotient of the preprojective algebra of type $A_n$, an algebra itself of considerable interest in representation theory.  Moreover, we will find that 3-forms and above vanish, so the geometry is in that limited sense 2-dimensional. We take the same form of calculus for $\N$ also, but now with $\Omega^1$ infinite-dimensional.  The analysis of the QRG is obtained in Section~\ref{secn}, building on a preliminary exploration of $A_2$--$A_5$ by hand (and by Mathematica) in Section~\ref{secsmall}. The actual moduli of QLCs is rather rich and includes a parameter $s$ which to be $*$-preserving needs to be a phase $|s|=1$. The allowed metrics also admit a minus sign $\eps$ if we keep track of which coefficients are positive. Then, in Section~\ref{secn}, we extrapolate from this to general formulae for $A_n$ and $\N$ with a uniform solution for the QRG with the freely chosen metric weights $\{h_i\}$, sign parameter $\eps$ in the metric and modulus 1 parameter $s$ in the quantum Levi-Civita connection.  The physical case of all metric coefficients positive requires $\eps=1$ and if we want the Christoffel symbols to also be real then we are forced to $s=\pm1$ (they also simplify vastly on this case). We adopt these canonical forms of the QRGs for the rest of the paper, summarised in Proposition~\ref{canN} for $\N$ and in Corollary~\ref{canAn} for $A_n$.

After finding this canonical form for the QRGs, we then study the induced Laplacian and aspects of scalar field theory on them in Section~\ref{seclap}. The effect of the direction dependence $\phi_i$ on $\N$ translates to a derivative term correction to the Laplacian which alternates with a $(-1)^i$ factor, preventing a straightforward continuum limit. However, this is suppressed as $1/i$ so that as the lattice spacing tends to zero, this complication is pushed to the boundary at 0. A secondary effect of the $\phi_i$ factor is that the overall geometric factor $\beta^{-1}$ in front of the  Laplacian has a correction compared to the same choices of $h_i$ on $\Z$. We analyse this for the case of constant $h_i$ as something like a $1\over x^2$ force towards the origin. Some partitions functions for scalar field theory on the $A_3$ graph are also computed as proof of concept, with respect to a measure for integration on the $A_3$ graph.

In Section~\ref{secqg}, we study the Riemann curvature of our QRGs on $\N$ and $A_n$ and some aspects of quantum gravity. In both cases,  we find a unique set of metric coefficients $\{h_i^{flat}\}$ such that the scalar curvature vanishes (one can similarly solve for any prescribed curvature). For this background, the Laplacian to leading order (again ignoring the suppressed $(-1)^i$ term) is given by the Airy operator ${1\over x}{\extd^2\over\extd x^2}$. We also look at metrics modifying the flat one, $h_i=h_i^{flat}e^{\psi_i}$ and find in the continuum limit that the leading order scalar curvature is ${e^{-\psi}\over x}{\extd^2\psi\over\extd x^2}$. For a natural choice of measure of integration given by the metric itself, the resulting Einstein-Hilbert action in the continuum limit is topological. For $A_3$, we also look at quantum gravity defined by the discrete Einstein-Hilbert action. The paper concludes with some final remarks about further work.

\section{Recap of quantum Riemannian geometry on graphs}\label{secpre}

It is quite important that our geometric constructions are not ad-hoc but part of a general framework which applies
to most unital algebras and is then restricted to the algebra of functions on the vertices of a graph as explained in \cite{Ma:gra}.

\subsection{Quantum Riemannian geometry} \label{secoutline} We will not need the full generality of the theory and give only the bare bones at this general level, for orientation.  Details are in \cite{BegMa}.

We work with a unital possibly noncommutative algebra $A$ viewed as a `coordinate algebra'. We replace the notion of differential structure on a space by specifying a bimodule $\Omega^1$ of differential forms over $A$. A bimodule means we can multiply a `1-form' $\omega\in\Omega^1$ by `functions' $a,b\in A$ either from the left or the right and the two should associate according to
\begin{equation}\label{bimod} (a\omega)b=a(\omega b).\end{equation}
We also need $\extd:A\to \Omega^1$ an `exterior derivative' obeying reasonable axioms, the most important of which is the Leibniz rule
\begin{equation}\label{leib} \extd(ab)=(\extd a)b+ a(\extd b)\end{equation}
for all $a,b\in A$. We usually require $\Omega^1$ to extend to forms of higher degree to give a graded algebra  $\Omega=\oplus\Omega^i$ (where associativity extends the bimodule identity (\ref{bimod}) to higher degree). We also require $\extd$ to extend to $\extd:\Omega^i\to \Omega^{i+1}$ obeying a graded-Leibniz rule with respect to the graded product $\wedge$ and $\extd^2=0$. This much structure is common to most forms of noncommutative geometry, including \cite{Con} albeit there it is not a starting point. In our constructive approach, this `differential structure' is the first choice we have to make in model building once we fixed the algebra $A$. We require that $\Omega$ is then generated by $A,\extd A$ as it would be classically. A first order calculus is inner if there is $\theta\in\Omega^1$ such that $\extd=[\theta, ]$ and similarly with graded commutator for the exterior algebra to be inner.

Next, on an algebra with differential structure, we define a metric as an element $g\in \Omega^1\tens_A\Omega^1$ which is invertible in the sense of a map $(\ ,\ ):\Omega^1\tens_A\Omega^1\to A$ that commutes with the product by $A$ from the left or right and inverts $g$ in the sense
\begin{equation}\label{metricinv}((\omega,\ )\tens_A\id)g=\omega=(\id\tens_A(\ ,\omega))g\end{equation}
 for all 1-forms $\omega$. This requires the metric to be central. In the general theory, one can require quantum symmetry in the form $\wedge(g)=0$, where we consider the wedge product on 1-forms as a map $\wedge:\Omega^1\tens_A\Omega^1\to A$ and apply this to $g$. In practice, we might omit quantum symmetry or impose some variant according to context. 

Finally, we need the notion of a connection. A left connection on $\Omega^1$ is a linear map $\nabla :\Omega^1\to \Omega^1\tens_A\Omega^1$ obeying a left-Leibniz rule
\begin{equation}\label{connleib}\nabla(a\omega)=\extd a\tens_A \omega+ a\nabla \omega\end{equation}
for all $a\in A, \omega\in \Omega^1$. This might seem mysterious but if we think of a map $X:\Omega^1\to A$ that commutes with the right action by $A$ as a `vector field' then we can evaluate $\nabla$ as a covariant derivative $\nabla_X=(X\tens_A\id)\nabla:\Omega^1\to \Omega^1$, which classically would be a usual covariant derivative on $\Omega^1$. There is a similar notion for a connection on a general `vector bundle' expressed algebraically, but we only need the $\Omega^1$ case. Moreover, when we have both left and right actions of $A$ forming a bimodule as we do here, we say that a left connection is a {\em bimodule connection}\cite{DVM,BegMa} if there also exists a bimodule map $\sigma$ such that
\begin{equation}\label{sigma} \sigma:\Omega^1\tens_A\Omega^1\to \Omega^1\tens_A\Omega^1,\quad \nabla(\omega a)=(\nabla\omega)a+\sigma(\omega\tens_A\extd a)\end{equation}
for all $a\in A, \omega\in \Omega^1$.  The map $\sigma$, if it exists, is unique, hence this is not additional data but a property that some connections have.  The key thing is that bimodule connections extend automatically to tensor products  as
\begin{equation} \nabla(\omega\tens_A\eta)=\nabla\omega\tens_A\eta+(\sigma(\omega\tens_A(\ ))\tens_A\id)\nabla\eta\end{equation} for all $\omega,\eta\in \Omega^1$, so that metric compatibility now makes sense as $\nabla g=0$. A connection is called  QLC or `quantum Levi-Civita' if it is  metric compatible and the torsion also vanishes, which in our language amounts to $\wedge\nabla=\extd$ as equality of maps $\Omega^1\to \Omega^2$.
We also have a Riemannian curvature for any connection,
\begin{equation}\label{curv} R_\nabla=(\extd\tens_A\id-\id\wedge\nabla)\nabla:\Omega^1\to \Omega^2\tens_A\Omega^1,\end{equation} where classically one would interior product the first factor against a pair of vector fields to get an operator on 1-forms. Ricci requires more data and the current state of the art (but probably not the only way) is to introduce a lifting bimodule map $i:\Omega^2\to\Omega^1\tens_A\Omega^1$. Applying this to the left output of $R_\nabla$, we are then free to `contract' by using the metric and inverse metric to define ${\rm Ricci}\in \Omega^1\tens_A\Omega^1$ \cite{BegMa}. The associated Ricci scalar and the geometric quantum Laplacian are
\begin{equation}\label{LapA} S=(\ ,\ ){\rm Ricci}\in A,\quad \Delta=(\ ,\ )\nabla\extd: A\to A\end{equation}
defined again along lines that generalise these classical concepts to any algebra with differential structure, metric and connection.

Finally, and critical for physics, are unitarity or `reality' properties. We work over $\C$ but assume that $A$ is a $*$-algebra (real functions, classically, would  be the self-adjoint elements). We require this to extend to $\Omega$ as a graded-anti-involution (so reversing order with an extra sign when odd degree differential forms are involved) and to commute with $\extd$. `Reality' of the metric and of the connection in the sense of being $*$-preserving are imposed as \cite{BegMa},
\begin{equation}\label{realgnab} g^\dagger=g,\quad \nabla\circ *= \sigma\circ\dagger\circ \nabla;\quad (\omega \tens_A\eta)^\dagger=\eta^*\tens_A \omega^*,\end{equation}  where $\dagger$ is the natural $*$-operation on $\Omega^1\tens_A\Omega^1$. These `reality' conditions in a self-adjoint basis (if one exists) and in the classical case would ensure that the metric and connection coefficients  are real.

In practical terms, if the exterior algebra is inner then a connection has the form\cite{Ma:gra}\cite[Prop. 8.11]{BegMa} $\nabla=\theta\tens(\ )-\sigma((\ ) \tens\theta)+\alpha$ for a free choice of bimodule maps $\sigma$ as above and  $\alpha:\Omega^1
\to \Omega^1\tens_A\Omega^1$.  Then torsion free is equivalent to $\wedge\alpha=0$ and $\wedge\sigma=-\wedge$ while $\nabla g=0$ is equivalent to
\begin{equation}\label{metcon} \theta\tens g + (\id\tens\alpha)g + \sigma_{12}(\id\tens(\alpha - \sigma(\tens\theta)))g = 0. \end{equation}
In the $*$-algebra case, if $\theta^*=-\theta$ then we need $(\dagger\circ\sigma)^2=\id$ and $\sigma\circ\dagger\circ\alpha=\alpha\circ *$ for a $*$-preserving connection.

\subsection{Canonical exterior algebras on a graph} \label{secX}

Let $X$ be a discrete set and $A=\C(X)$ the usual commutative algebra of complex functions on it. It can be shown (basically by considering the action of $\delta$-functions) that for such an algebra, the possible differential structures $(\Omega^1,\extd)$ are in 1-1 correspondence with directed graphs with $X$ as the set of vertices, c.f. \cite{Con, Ma:gra,BegMa}. A directed graph just means to draw at most one arrow between some of the vertices, and no self-arrows are allowed. In fact, for the calculus to admit a quantum metric the graph needs to be bidirected, i.e. whenever there is an arrow $x\to y$ there is also an arrow $y\to x$; in other words, our data will just be an undirected graph where $x-y$ means an arrow in both directions. The reason this graph language is useful is that $\Omega^1$ has a basis $\{\omega_{x\to y}\}$ over $\C$ exactly labelled by the arrows of the graph. We then define the  bimodule structure and differential
\[ f.\omega_{x\to y}=f(x)\omega_{x\to y},\quad \omega_{x\to y}.f=\omega_{x\to y}f(y),\quad \extd f=\sum_{x\to y}(f(y)-f(x))\omega_{x\to y}\]
and in the bidirected case a quantum metric has the form \cite{Ma:gra}
\[ g=\sum_{x\to y}g_{x\to y}\omega_{x\to y}\tens_A\omega_{y\to x}\]
with weights $g_{x\to y}\in \R\setminus\{0\}$ for every arrow.  The calculus over $\C$ is compatible with complex conjugation on functions $f^*(x)=\overline{f(x)}$ and $\omega_{x\to y}^*=-\omega_{y\to x}$, from which we see that `reality' of the metric in (\ref{realgnab}) indeed amounts to real metric weights. It is not required mathematically, but reasonable from the point of view of the physical interpretation, to prefer  the {\em edge symmetric} case where $g_{x\to y}=g_{y\to x}$ is independent of the direction, as a variant of `quantum symmetry' in this  context\cite{Ma:haw,BegMa}. First order calculi on sets are always inner with $\theta=\sum_{x\to y}\omega_{x\to y}$, the sum of all arrows.

Finding a QLC for a metric depends on $\Omega^2$, and here there are four canonical choices for $\Omega$ in the sense that they are defined for any graph. They are all quotients of the path algebra which in degree $d$ consists of the $d$-step paths $\omega_{x_0\to x_1}\tens \cdots\tens\omega_{x_{d-1}\to x_d}\in \Omega^1\tens_A\cdots\tens_A\Omega^1$ (this is the tensor algebra of $\Omega^1$ over $A$). We quotient this by the quadratic relations\cite[Prop.~1.40]{BegMa}
\[ \sum_{y: p\to y\to q} \omega_{p\to y}\wedge\omega_{y\to q} =0\]
for all fixed  $p,q$ that obey one of four conditions. This leads to the four exterior algebras forming a diamond:
\[   \begin{array}{rcl} &\Omega_{max}& \\  \swarrow& \quad & \searrow\\ \Omega_{med}& &\Omega_{med'} \\ \searrow  & & \swarrow\\ & \Omega_{min}& \end{array}\]
where the conditions are all $p,q$ such that
\begin{align*} \Omega_{min}&:\quad {\rm all}\  p,q \\
 \Omega_{med}&: \quad p\ne q     \\
 \Omega_{med'}&:\quad    p\to \kern-10pt |\quad q   \\
 \Omega_{max}&: \quad p\ne q,\quad p\to \kern-10pt |\quad q.
\end{align*}
Three of these were explicitly discussed in \cite{BegMa, Ma:boo} while $\Omega_{med'}$ was
recently used in \cite{Gho}. The exterior derivative $\extd$ is given in \cite{BegMa} with only the first two necessarily inner at the level of the exterior algebra.

\subsection{Exterior algebras on $A_n$ and preprojective algebras}\label{secext}

The preprojective algebra of a graph is a quotient of the path algebra of the graph viewed as bidirected (each edge is viewed as a pair of arrows, one in each direction). For the Dynkin graph of type $A_n$ with nodes numbered in order $1,2,\cdots, n$, we denote the edges
\[ a_i=\omega_{i\to i+1},\quad a_i'=\omega_{i+1\to i}=-a_i^*,\quad i=1,\cdots,n-1.\]
In the maths literature, the notation $a_i^*$ is used for what we denote $a_i'$; the two differ by a sign which just amounts to a different normalisation but is needed for our exterior algebras to become $*$-exterior algebras when working over $\C$. We also denote by $\delta_i$ the Kronecker $\delta$-functions at the nodes. The path algebra then has the relations that all products of these generators are zero except
 \[ \delta_i^2=\delta_i,\quad \delta_i a_i=a_i=a_i\delta_{i+1},\quad \delta_{i+1}a_i'=a_i'=a_i'\delta_i,\quad a_ia_{i+1},\quad  a_{i+1}'a_i',\quad a_ia_i',\quad a_i'a_i\]
 The dimension of the path algebra in degree 0 is $n$ with basis $\delta_i$. In degree 1 it is $2(n-1)$-dimensional with basis $a_i,a_i'$ and in degree 2 it is $2(2n-3)$-dimensional with basis $a_ia_{i+1},a_{i+1}'a_i'$ for $i=1,\cdots,n-2$ and $a_ia_i', a_i'a_i $ for $i=1,\cdots,n-1$.

\begin{proposition} For a Dynkin graph of type $A_n$, $\Omega_{max}=\Omega_{med}$ is a quotient of the path algebra  by the relations
\begin{equation}\label{maxrel} a_ia_{i+1}=0,\quad  a_{i+1}'a_i' =0,\quad i=1,\cdots,n-2\end{equation}
and $\Omega_{min}=\Omega_{med'}$ is the further quotient by the relations
\begin{equation}\label{pirel} a_1a_1'=0,\quad a_{n-1}'a_{n-1}=0,\quad a_{i+1}a_{i+1}'+ a_i'a_i=0,\quad i=1,\cdots,n-2.\end{equation}
The latter case is inner with $\extd =[\theta,\ \}$, where $\theta=\sum_i a_i+a_i'$, and $\Omega^2_{min}$ is $n-2$-dimensional while  $\Omega_{min}^i=0$ for $i\ge 3$.
\end{proposition}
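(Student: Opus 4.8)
The plan is to work entirely inside the path (tensor) algebra of $A_n$, whose degree-$2$ part carries the explicit $2(2n-3)$-element basis $\{a_ia_{i+1},\ a'_{i+1}a'_i\}_{i=1}^{n-2}\cup\{a_ia'_i,\ a'_ia_i\}_{i=1}^{n-1}$ recalled above, and to read off the defining quadratic relations of each of the four exterior algebras directly. First I would enumerate the length-two paths $p\to y\to q$ in the line graph: up to reversal these are only the straight-through paths $i\to i{+}1\to i{+}2$ (contributing $a_ia_{i+1}$, with $q=p{+}2$) and the back-and-forth paths $i\to i{\pm}1\to i$ (contributing $a_ia'_i$ and $a'_{i-1}a_{i-1}$, with $q=p$); adjacent $p,q$ with $|p-q|=1$ admit no length-two path and give vacuous relations. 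Evaluating $\sum_{y:\,p\to y\to q}\omega_{p\to y}\wedge\omega_{y\to q}=0$ at each allowed $(p,q)$ then shows that the straight-through case produces exactly (\ref{maxrel}), while the case $p=q$ produces the three families of (\ref{pirel}), the boundary vertices $1$ and $n$ degenerating to $a_1a'_1=0$ and $a'_{n-1}a_{n-1}=0$ and the interior vertices giving $a_{i+1}a'_{i+1}+a'_ia_i=0$. The essential point is that on the line graph the non-vacuous relations occur only at $q=p$ and $q=p\pm2$, so the four conditions collapse to just two distinct ideals: matching against the diamond yields that $\Omega_{max}$ and $\Omega_{med'}$ receive only (\ref{maxrel}), while $\Omega_{min}$ and $\Omega_{med}$ receive in addition (\ref{pirel}).

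For the inner claim I would use the general fact quoted above that the first-order calculus is already inner with $\theta=\sum_i(a_i+a'_i)$, so only the statement on higher forms is at stake, and for this the key computation is $\theta\wedge\theta=0$ in $\Omega_{min}$. Expanding $\theta^2$ and keeping only composable products gives $\sum_i a_ia_{i+1}+\sum_i a'_ia'_{i-1}+\sum_i(a_ia'_i+a'_ia_i)$; the first two sums vanish by (\ref{maxrel}), and regrouping the last as $a_1a'_1+a'_{n-1}a_{n-1}+\sum_{i=1}^{n-2}(a_{i+1}a'_{i+1}+a'_ia_i)$ shows it vanishes termwise by (\ref{pirel}). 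This is exactly the step at which (\ref{pirel}) is indispensable, which is also why $\theta^2\neq0$ and the calculus fails to be inner in $\Omega_{max}$. Given $\theta^2=0$ one sets $\extd=[\theta,\ \}$, notes it reproduces the first-order $\extd$ on $A$, and concludes it is a graded derivation with $\extd^2=0$, so the whole exterior algebra is inner.

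To compute $\dim\Omega^2_{min}$ I would quotient in two stages. Imposing (\ref{maxrel}) kills the $2(n-2)$ elements $a_ia_{i+1},a'_{i+1}a'_i$, leaving the $2(n-1)$ elements $\{a_ia'_i,a'_ia_i\}_{i=1}^{n-1}$. Now imposing (\ref{pirel}), the interior relations $a_{i+1}a'_{i+1}=-a'_ia_i$ eliminate $a_2a'_2,\dots,a_{n-1}a'_{n-1}$ in favour of $a'_1a_1,\dots,a'_{n-2}a_{n-2}$, while the two boundary relations set $a_1a'_1=0$ and $a'_{n-1}a_{n-1}=0$; checking that these $n$ relations are independent leaves the $n-2$ survivors $\{a'_ia_i\}_{i=1}^{n-2}$ as a basis.

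Finally, for $\Omega^i_{min}=0$ when $i\ge3$ it suffices to show $\Omega^2\wedge\Omega^1=0$. Right-multiplying a survivor $a'_ia_i$ by the only composable generators $a_{i+1}$ and $a'_i$ gives $a'_i(a_ia_{i+1})=0$ by (\ref{maxrel}) and $a'_i(a_ia'_i)=0$ using (\ref{pirel}) and then (\ref{maxrel}), so $\Omega^3_{min}=0$ and all higher degrees vanish by induction, consistent with $\Omega_{min}$ being a finite-dimensional quotient of the preprojective algebra of $A_n$. I expect the main obstacle to be bookkeeping rather than conceptual: correctly handling the degenerate boundary relations at vertices $1$ and $n$ throughout, and genuinely verifying independence of the relations in the dimension count rather than merely subtracting counts.
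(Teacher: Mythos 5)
Your proposal is correct in substance and, where it overlaps with the paper's own proof, follows essentially the same route: the paper likewise treats the dimensions up to degree $2$ as immediate from the quadratic nature of the relations, and kills degree $3$ by exactly your computation (two steps in the same direction vanish by (\ref{maxrel}), while zig-zag paths such as $a_ia_i'a_i=-a_ia_{i+1}a_{i+1}'=0$ vanish by (\ref{pirel}) followed by (\ref{maxrel})). You additionally spell out what the paper leaves implicit --- the derivation of (\ref{maxrel}) and (\ref{pirel}) from the $2$-path sums at the allowed pairs $(p,q)$, the verification $\theta^2=0$ underlying the inner claim, and the independence of the $n$ degree-$2$ relations --- and all of that bookkeeping is sound.

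One step, however, is asserted rather than derived, and as stated it conflicts with the definitions you would have to use. The relations (\ref{pirel}) arise from the pairs with $p=q$; such pairs satisfy the ``no arrow from $p$ to $q$'' condition (there are no self-arrows) but violate $p\ne q$. Hence, with the four conditions as listed in Section~\ref{secX}, it is $\Omega_{med'}$ (condition: no arrow) that acquires (\ref{pirel}) and $\Omega_{med}$ (condition: $p\ne q$) that does not, so the collapse comes out as $\Omega_{min}=\Omega_{med'}$ and $\Omega_{max}=\Omega_{med}$ --- the opposite pairing to the one you say the matching ``yields'', which is the pairing appearing in the proposition. This exposes an inconsistency internal to the paper (the proposition's $med/med'$ labels versus the conditions of Section~\ref{secX}; one of the two must be a typo), and a blind proof ought to flag it: your argument correctly establishes that on $A_n$ there are only two distinct ideals, namely (\ref{maxrel}) alone and (\ref{maxrel}) together with (\ref{pirel}), but the attachment of the labels $med$ and $med'$ to these two ideals does not follow from --- indeed contradicts --- the conditions you are implicitly citing. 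A smaller point: your parenthetical claim that $\theta^2\ne0$ is ``why the calculus fails to be inner in $\Omega_{max}$'' is a non sequitur. Innerness of the exterior algebra requires $\theta^2$ to be graded-central, so that $[\theta,\,\cdot\,\}$ squares to zero; it does not require $\theta^2=0$, and on $A_n$ one can check that $\theta^2=\sum_i(a_ia_i'+a_i'a_i)$ commutes with all generators of $\Omega_{max}$. Neither issue affects your dimension count for $\Omega^2_{min}$ or the vanishing of $\Omega^i_{min}$ for $i\ge3$, which are correct and match the paper.
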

\begin{proof}The dimensions up to $\Omega^2$ are clear from the stated bases and quadratic nature of the relations. In degree 3 we consider all 3-step paths and their image in $\Omega^3_{min}$. Since any 2-steps in the same direction vanish by  (\ref{maxrel}), the only possible images in the quotient are for zig-zag paths such as $a_{i+1}'a_{i+1}a_{i+1}'=-a'_{i+1}a_i'a_i=0$ using (\ref{pirel}) and then (\ref{maxrel}). Similarly for zig-zag the other way, $a_ia_i'a_i=-a_ia_{i+1}a_{i+1}'=0$.  \end{proof}

The preprojective $\Pi_n$ has just the (\ref{pirel})  relations and  dimensions
\[ n,\ 2(n-1),\ 3(n-2),\ ... ,\ (n-1)2,\ n.\]
 We see that
$\Omega_{min}$ is a quotient of this by (\ref{maxrel}). Also, later, we will need a bimodule `lifting' map $i:\Omega^2_{min}\to \Omega^1\tens_A\Omega^1$ such that following this by $\wedge$ is the identity. Given the description above, the natural choice is
\begin{equation}\label{imin} i(a_i a_i')=-i( a_{i-1}'a_{i-1})={1\over 2}(a_i\tens a_i'- a_{i-1}'\tens a_{i-1}),\quad i=2,3,\cdots,n-1\end{equation}
where the product denotes wedge product. We take the same form of exterior algebra relations and lift map for the half-line $\N$, just without the upper bound on the indices $i$. Also note from the form of the path algebra that we can only have zero for the bimodule map $\alpha$, i.e. bimodule connections $\nabla$ are determined just from $\sigma$.

\section{Explicit calculations for $A_2,A_3,A_4,A_5$}\label{secsmall}

In this section, we give explicit solutions for small $A_n$. For $n\le 4$, these are manageable by hand and we show the details of the calculation. For $n=5$, we used Mathematica and Python (independently) and just list the final result.

\subsection{$A_2$ geometry}

The result is known from \cite{Ma:sq} by a different method, but here provides a warm up for the larger cases. We work over the directed graph $G(V,E)$ with vertices $V = \{1,2\}$ and directed edges or `arrows' $E = \{a_1,a_1'\}$ as in Figure~\ref{fig:A2}. The products distinct from zero in $\Omega^1\tens_A\Omega^1$ are $a_1\tens a_1', a_1'\tens a_1$. The exterior algebra $\Omega^2_{max}$ is 2-dimensional with basis $a_1\wedge a_1'$ and $a_1'\wedge a_1$. We work with $\Omega_{min}$ where these are set to zero.

\begin{figure}
    \centering
      \includegraphics[scale=0.6]{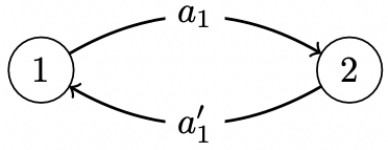}
 \caption{\label{fig:A2} $A_2$ Graph}
\end{figure}

Using the graded commutator for the exterior derivatives given that the calculus is inner with $\theta = a_1 + a_1'$,
\begin{align*}
    \extd a = [ \theta , a_1 \} =  a_1'\tens a_1 + a_1 \tens a_1' = \extd a_1',
\end{align*}

We necessarily take $\alpha=0$ and the most general form of $\sigma$ is 
\begin{align*}
    \sigma(a_1\tens a_1') = \tau_1 a_1\tens a_1', \quad \sigma(a_1'\tens a_1) = \tau'_1 a_1'\tens a_1
\end{align*}

The general form of the metric is
\[g = f_1 a_1\tens a_1' + f'_1 a_1'\tens a_1\]
where $f_1, f'_1$ are in the field, and real if we work over $\C$ and impose the reality condition for the metric.

The general form of the connection given that the calculus is inner is
\begin{align*}
    \nabla a_1 = a_1'\tens a_1 - \tau_1 a_1\tens a_1', \quad  \nabla a_1' = a_1 \tens a_1' - \tau'_1 a_1'\tens a_1
\end{align*}
As we are working in $\Omega_{min}$, there is no elements in $\Omega^2$ for this case. Then there are no conditions for torsion freeness.

The metric compatibility conditions are
\begin{align*}
    a_1 \tens a_1'\tens a_1 : f'_1 - f_1\tau_1\tau'_1 = 0  \\
    a_1'\tens a_1 \tens a_1': f_1 - f'_1\tau_1\tau'_1 = 0
\end{align*}

These conditions imply that there is a sign $\eps=\tau_1\tau'_1=\pm1$ with $f'_1 = \eps f_1 $. The $*$-preserving conditions
\[\abs{\tau_1} = 1\]
with $\tau'_1=\eps\tau_1^{-1}$. We see that there is one sign and one overall normalisation in the metric
\[ g = h_1 (a_1\tens a_1' +  \eps a_1'\tens a_1)\]
which allows a QLC with one parameter $\tau_1=s$ in characteristic zero
\begin{align*}
    \nabla a_1 = a_1'\tens a_1 -s a_1\tens a_1', \quad  \nabla a_1' = a_1 \tens a_1' -\eps s^{-1} a_1'\tens a_1.
\end{align*}
and the further condition that $h_1$ is real and $|s|=1$ for the reality property of the metric and for the connection to be $*$-preserving in the case over $\C$.  All the connections are flat since $\Omega^2=0$.

\subsection{$A_3$ geometry}

We work over the directed graph $G(V,E)$ with vertices $V = \{1,2,3\}$ and directed edges $E = \{a_1,a_1',a_2,a_2'\}$  as in Figure~\ref{fig:A3}. The products in $\Omega^1\tens_A\Omega^1$ different from zero are those where the head of the first arrow connects to the tail of the second arrow, giving the six non-zero elements $a_1\tens a_1', a_1\tens a_2, a_1'\tens a_1, a_2\tens a_2', a_2'\tens a_1', a_2'\tens a_2$.

\begin{figure}
       \centering
   \includegraphics[scale=0.6]{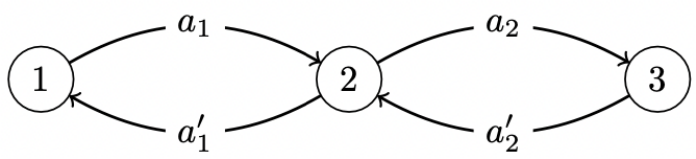}
  \caption{ \label{fig:A3} $A_3$ Graph}
\end{figure}

The exterior algebra $\Omega_{max}$ for the maximal prolongation has the relations
\begin{align*}
    a_1 \wedge a_2 = a_2' \wedge a_1' = 0
\end{align*}
and we work with the quotient $\Omega_{min}$ of this where we add the further relations
\begin{align*}
    a_1 \wedge a_1' = a_2' \wedge a_2 = 0,\quad a_1' \wedge a_1 + a_2 \wedge a_2' = 0.
\end{align*}
The dimensions of the vector spaces of $\Omega^i$ are therefore 3:4:1.

The exterior derivative is  given by the graded commutator $\extd = [\theta, \}$ with the inner element $\theta = a_1 + a_1' + a_2 + a_2'$ as
\begin{align*}
    \extd a_1 = a_1'\wedge a_1, \quad
    \extd a_1'= a_1'\wedge a_1,\quad \extd a_2 =-a_1'\wedge a_1, \quad
    \extd a_2'=-a_1'\wedge a_1.
\end{align*}

The metric, as it has to be central, has to have the form
\[g = f_1 a_1\tens a_1' + f'_1 a_1'\tens a_1 + f_2 a_2\tens a_2' + f'_2 a_2'\tens a_2,\]
where $f_1,f'_1,f_2,f'_2$ are in the field, and should be real if we work over $\C$ and impose the reality condition $\dagger \circ g = g$.

Given the calculus is inner, the torsion free connections have the form
\begin{align*}
    \nabla a_1  &=  a_1'\tens a_1 - \tau_1 a_1\tens a_1' - \sigma_1 a_1\tens a_2,\\
    \nabla a_1' &=  a_1\tens a_1' + a_2'\tens a_1' - \tau_1' a_1'\tens a_1 - (\tau_1'+1) a_2\tens a_2',\\
    \nabla a_2  &= a_1\tens a_2 + a_2' \tens a_2 - \tau_2 a_2\tens a_2' - (\tau_2+1) a_1'\tens a_1,\\
    \nabla a_2' &= a_2\tens a_2' - \sigma_2' a_2'\tens a_1' - \tau'_2 a_2'\tens a_2,
\end{align*}
where the map $\alpha=0$ and the braiding map is given by
\begin{align*}
    \sigma(a_1\tens a_1') &= \tau_1 a_1\tens a_1',\\
    \sigma(a_1\tens a_2) &=  \sigma_1 a_1\tens a_2 \\
    \sigma(a_1'\tens a_1) &= \tau_1' a_1'\tens a_1  + (\tau_1'+1) a_2\tens a_2',\\
    \sigma(a_2\tens a_2') &= \tau_2 a_2\tens a_2' + (\tau_2+1) a_1'\tens a_1,\\
    \sigma(a_2'\tens a_1') &=\sigma_2'  a_2'\tens a_1',\\
    \sigma(a_2'\tens a_2) &= \tau'_2 a_2'\tens a_2.
\end{align*}

Metric compatibility (\ref{metcon}) then produces
\begin{align*}
     a_1\tens a_1'\tens a_1&:   -f_1 \tau_1 \tau_1' +  f'_1 = 0,\\
     a_1\tens a_2 \tens a_2' &: -f_1 \sigma_1 (\tau_1' +1) +  f_2 = 0,\\
    a_1'\tens a_1 \tens a_1' &: -f'_1 \tau_1 \tau_1' -  f_2 (\tau_2 +1) \sigma_2' +  f_1= 0,\\
    a_2 \tens a_2'\tens a_2 &:  -f'_1 \sigma_1 (\tau_1' +1) -  f_2 \tau_2 \tau'_2 +  f'_2 = 0,\\
    a_2'\tens a_1'\tens a_1&:   -f'_2 (\tau_2 +1) \sigma_2' +  f'_1 = 0,\\
    a_2'\tens a_2 \tens a_2' &: -f'_2 \tau_2 \tau'_2 +  f_2 = 0,\\
    a_2 \tens a_2'\tens a_1' &: -f'_1 \tau_1 (\tau_1' +1) -  f_2 \tau_2 \sigma_2'= 0,\\
    a_1'\tens a_1 \tens a_2 &:  -f'_1 \sigma_1 \tau_1' -  f_2 (\tau_2 +1) \tau'_2= 0.
\end{align*}

Under these conditions, we have two parameters and one sign in the metric as
\begin{align*}
    g = h_1 (   \phi a_1\tens a_1' + \eps a_1'\tens a_1) + h_2( \frac{1}{\phi} a_2\tens a_2' + \eps   a_2'\tens a_2), \quad \phi =\sqrt{2},
\end{align*}
where the connection is
\begin{align*}
    \tau_1& = s, \quad
    \sigma_1 = \frac{{h_2} {s}}{{h_1} \eps\phi  (\eps\phi  {s}+1)}, \quad
    \tau'_1 = \frac{1}{\eps\phi  s}, \quad \\
    \tau_2 &= -1 + \frac{1}{2 + \eps\phi  s}, \quad
    \sigma'_2 = \frac{ h_1 \eps\phi }{h_2}(s + \eps\phi ),    \quad
    \tau'_2 = -\frac{1}{\eps\phi }\left( 1 + \frac{1}{1 + \eps\phi   s} \right)
\end{align*}
for a free parameter $s$.  Notice that only the combination $\eps\phi$ enters. We do not consider $\phi=-\sqrt{2}$ in the metric as this would be equivalent to  a redefinition of $\eps,h_1,h_2$ by a change of sign. Finally, the *-preserving condition for the connection just requires
\begin{align}
    \abs{s} = 1
\end{align}
with no further constraints on $h_i$ other than to be real.

\subsection{$A_4$ geometry}

We again work with $\Omega_{min}$ which now has vector space dimensions $4:6:2$ with $\Omega^i=0$ for $i\ge 3$. Here, the path algebra is 10-dimensional in degree 2, $\Omega^2_{max}$ adds 4 relations  and then we add further relations for $\Omega^2$,
\begin{align*}
 a_1\wedge a_1'=a_3'\wedge a_3=0,\quad   a_1'\wedge a_1 + a_2\wedge a_2' = 0, \quad a_2'\wedge a_2 + a_3\wedge a_3' = 0.
\end{align*}
The metric, to be central, has to have the form
\[g = f_1 a_1\tens a_1' + f'_1 a_1'\tens a_1+f_2 a_2\tens a_2' + f'_2 a_2'\tens a_2+f_3 a_3\tens a_3' + f'_3 a_3'\tens a_3\]
where $f_1,f'_1,f_2,f'_2,f_3,f'_3$ are in the field, and real for the reality condition $\dagger \circ g = g$ when working over $\C$. We necessarily take $\alpha=0$ and the torsion free connection and bimodule braiding map have to have the form
\begin{align*}
    \nabla a_1  &=  a_1'\tens a_1 - \tau_1 a_1\tens a_1' - \sigma_1 a_1\tens a_2,\\
    \nabla a_1' &=  a_1\tens a_1' + a_2'\tens a_1' - \tau_1' a_1'\tens a_1 -  (\tau_1'+1) a_2\tens a_2',\\
    \nabla a_2  &= a_1\tens a_2 + a_2' \tens a_2 - \tau_2 a_ 2\tens a_2' - (\tau_2+1) a_1'\tens a_1 - \sigma_2 a_2\tens a_3,  \\
    \nabla a_2' &= a_2\tens a_2' + a_3'\tens a_2' - \sigma_2' a_2'\tens a_1' - \tau_2' a_2'\tens a_2 -(\tau_2'+1) a_3\tens a_3' ,\\
    \nabla a_3  &= a_2\tens a_3 + a_3'\tens a_3 - \tau_{3} a_3 \tens a_3' - (\tau_{3}+1) a_2'\tens a_2,\\
    \nabla a_3' &=  a_3\tens a_3' - \sigma_{3}'a_3' \tens a_2' - \tau'_3 a_3'\tens a_3,
\end{align*}
\begin{align*}
    \sigma( a_1 \tens a_1') &= \tau_1 a_1 \tens a_1' ,\\
    \sigma( a_1 \tens a_2 ) &= \sigma_1 a_1 \tens a_2 ,\\
    \sigma( a_1'\tens a_1 ) &= \tau_1' a_1'\tens a_1 + (\tau_1' + 1 ) a_2 \tens a_2' ,\\
    \sigma( a_2 \tens a_2') &= \tau_2 a_2 \tens a_2'+ (\tau_2+1) a_1'\tens a_1 ,\\
    \sigma( a_2 \tens a_3 ) &= \sigma_2 a_2 \tens a_3 ,\\
    \sigma( a_2'\tens a_1') &= \sigma_2' a_2'\tens a_1' ,\\
    \sigma( a_2'\tens a_2 ) &= \tau_2' a_2'\tens a_2 + (\tau_2'+1) a_3 \tens a_3',\\
    \sigma( a_3 \tens a_3') &= \tau_{3} a_3 \tens a_3' + (\tau_{3}+1) a_2'\tens a_2 ,\\
    \sigma( a_3'\tens a_2') &= \sigma_{3}' a_3'\tens a_2' ,\\
    \sigma( a_3'\tens a_3 ) &= \tau'_3 a_3'\tens a_3.
\end{align*}

The metric compatibility conditions are
\begin{align}
    a_1 \tens a_1'\tens a_1&: - f_1 \tau_1 \tau_1' +  f'_1 = 0, \\
    a_1 \tens a_2 \tens a_2'&:- f_1 \sigma_1 (\tau_1'+1) +  f_2 = 0, \\
    a_1'\tens a_1 \tens a_1'&:- f'_1 \tau_1 \tau_1' -  f_2 (\tau_2+1) \sigma_2' +  f_1 = 0, \\
    a_2 \tens a_2'\tens a_2&: - f'_1 \sigma_1 (\tau_1'+1) -  f_2 \tau_2 \tau_2' +  f'_2 = 0, \\
    a_2 \tens a_3 \tens a_3'&:- f_2 \sigma_2 (\tau_2'+1) +  f_3 = 0, \\
    a_2'\tens a_1'\tens a_1&: - f'_2 (\tau_2+1) \sigma_2' +  f'_1 = 0, \\
    a_2'\tens a_2 \tens a_2'&:- f_3 (\tau_3+1) \sigma_3' -  f'_2 \tau_2 \tau_2' +  f_2 = 0, \\
    a_3 \tens a_3'\tens a_3&: - f_3 \tau_3 \tau'_3 -  f'_2 \sigma_2 (\tau_2'+1) +  f'_3 = 0, \\
    a_3'\tens a_2'\tens a_2&: - f'_3 (\tau_3+1) \sigma_3' +  f'_2 = 0, \\
    a_3'\tens a_3 \tens a_3'&:- f'_3 \tau_3 \tau'_3 +  f_3 = 0, \\
    a_2 \tens a_2'\tens a_1'&:- f'_1 \tau_1 (\tau_1'+1) -  f_2 \tau_2 \sigma_2' = 0, \\
    a_1'\tens a_1 \tens a_2&: - f'_1 \sigma_1 \tau_1' -  f_2 (\tau_2+1) \tau_2' = 0, \\
    a_3 \tens a_3'\tens a_2'&:- f_3 \tau_3 \sigma_3' -  f'_2 \tau_2 (\tau_2'+1) = 0, \\
    a_2'\tens a_2 \tens a_3&: - f_3 (\tau_3+1) \tau'_3 -  f'_2 \sigma_2 \tau_2' = 0.
\end{align}

There are four metrics that allow one QLC each depending on a free parameter $s$. Here, the metric is found to be of the form
\[g = h_1( \phi a_1 \tens a_1' +  \eps a_1' \tens a_1)
+ h_2 ( a_2 \tens a_2' + \eps a_2' \tens a_2 )
+ h_3 ( \frac{1}{\phi} a_3 \tens a_3' +  \eps a_3' \tens a_3),\]
where $\eps=\pm1$ and
\[ \phi  = \frac{1 \pm \sqrt{5}}{2}.\]
We can chose either in what follows (so we have four metrics according to $\eps$ and the sign of the $\sqrt{5}$). For any choices of these we now solve for the connection and find for any value of $s$,
\begin{align*}
    \tau_1 &= s, \quad
    \tau_2 = -1 + {1\over \phi + \eps s}, \quad
    \tau_{3} = -1 + {1\over \phi}{ \phi + \eps s  \over (1-\eps)(\phi+\eps s)+\eps } ,\\
    \tau_1' &=  \eps{ -1 + \phi \over s}, \quad
    \tau_{2}'= \eps {(\phi + \eps s)(1 - 1/ \phi)\over  -(\phi + \eps s) + 1}, \quad
    \tau_{3}' =  {\eps\over\phi\tau_3}, \quad
    \sigma_1 = {h_2\over h_1 \phi}{s \over \eps(-1 + \phi)+s}, \\
    \sigma_2 &= {h_2\over h_3\phi}{1\over \tau_2' + 1}, \quad
    \sigma_{2}' = {h_1\over h_2}(\phi + \eps s), \quad
    \sigma_{3}' = {h_2\over h_3}\phi \left( 1 - \eps + {\eps\over \phi + \eps s} \right).
\end{align*}
Thus, for each metric we have  a 1-parameter family of connections with parameter $s$. In the $*$-algebra case, reality of the metric demands $h_i$ real and  $*-$preserving for the connection is  equivalent to
\[\abs{s} = 1\]
with no further constraints on $h_i$. So, there is a still  a 1-parameter moduli of connections for each of our four metrics,  now with $|s|=1$.

\subsection{$A_5$ geometry}

Now  the path algebra has dimension 14 in degree 2 while $\Omega_{min}$  has vector space dimensions $5:8:3$ again with $\Omega^i=0$ for $i\ge 3$. Proceeding similarly, the form of the braiding, connection, metric compatibility conditions and form of the metric are
\begin{align*}
    \sigma (a_1 \tens a_1') &= \tau_1 a_1 \tens a_1',\\
    \sigma (a_1 \tens a_2 ) &= \sigma_1 a_1 \tens a_2,\\
    \sigma (a_1'\tens a_1 ) &= \tau_1' a_1'\tens a_1 + (\tau_1' + 1) a_2 \tens a_2' ,\\
    \sigma (a_2 \tens a_2') &= \tau_2  a_2 \tens a_2' + (\tau_2 + 1) a_1' \tens a_1, \\
    \sigma (a_2 \tens a_3 ) &= \sigma_2 a_2 \tens a_3,\\
    \sigma (a_2'\tens a_2 ) &= \tau_2' a_2' \tens a_2 + (\tau_2' + 1) a_3 \tens a_3', \\
    \sigma (a_2'\tens a_1') &= \sigma_2' a_2' \tens a_1,\\
    \sigma (a_3 \tens a_3') &= \tau_3 a_3 \tens a_3' + (\tau_3 + 1) a_2' \tens a_2, \\
    \sigma (a_3 \tens a_4 ) &= \sigma_3 a_3 \tens a_4, \\
    \sigma (a_3'\tens a_3 ) &= \tau_3' a_3' \tens a_3 + (\tau_3' + 1) a_4 \tens a_4', \\
    \sigma (a_3'\tens a_2') &= \sigma_3' a_3' \tens a_2',\\
    \sigma (a_4 \tens a_4') &= \tau_4 a_4 \tens a_4' + (\tau_4 + 1) a_3' \tens a_3, \\
    \sigma (a_4'\tens a_3') &= \sigma_4' a_4' \tens a_3',\\
    \sigma (a_4'\tens a_4 ) &= \tau'_4 a_4' \tens a_4;
\end{align*}
\begin{align*}
    \nabla a_1 &= a_1' \tens a_1 - \tau_1 a_1 \tens a_1' - \sigma_1 a_1 \tens a_2,  \\
    \nabla a_1' &= a_1 \tens a_1' + a_2' \tens a_1' -\tau_1' a_1'\tens a_1 - (\tau_1' + 1) a_2 \tens a_2',\\
    \nabla a_2 &= a_2' \tens a_2  + a_1 \tens a_2 - \tau_2  a_2 \tens a_2' - (\tau_2 + 1) a_1' \tens a_1  - \sigma_2 a_2 \tens a_3, \\
    \nabla a_2' &= a_2 \tens a_2' + a_3' \tens a_2' - \tau_2' a_2' \tens a_2 - (\tau_2' + 1) a_3 \tens a_3' - \sigma_2' a_2' \tens a_1, \\
    \nabla a_3 &= a_3' \tens a_3  + a_2 \tens a_3 -\tau_3 a_3 \tens a_3' - (\tau_3 + 1) a_2' \tens a_2 -\sigma_3 a_3 \tens a_4, \\
    \nabla a_3' &= a_3 \tens a_3' + a_4' \tens a_3' -\tau_3' a_3' \tens a_3 - (\tau_3' + 1) a_4 \tens a_4' -\sigma_3' a_3' \tens a_2',\\
    \nabla a_4 &= a_4' \tens a_4  + a_3 \tens a_4 -\tau_4 a_4 \tens a_4' - (\tau_4 + 1) a_3' \tens a_3, \\
    \nabla a_4' &= a_4 \tens a_4' - \sigma_4' a_4' \tens a_3' - \tau'_4 a_4' \tens a_4;
\end{align*}
\begin{align*}
    a_1 \tens a_1' \tens a_1 &:     -f_1 \tau_1  \tau_1' + f'_1 = 0,\\
    a_1 \tens a_2 \tens a_2' &:     -f_1 \sigma_1 (\tau_1'+1) + f_2 = 0, \\
    a_1' \tens a_1 \tens a_1' &:    -f'_1 \tau_1  \tau_1' - f_2 (\tau_2 + 1)  \sigma_2' + f_1 = 0, \\
    a_2 \tens a_2' \tens a_2 &:     -f'_1 \sigma_1 (\tau_1' + 1) - f_2  \tau_2  \tau_2' + f'_2 = 0, \\
    a_2 \tens a_3 \tens a_3' &:     -f_2  \sigma_2 (\tau_2' + 1) + f_3 = 0, \\
    a_2' \tens a_1' \tens a_1 &:    -f'_2 (\tau_2 + 1)  \sigma_2' + f'_1 = 0, \\
    a_2' \tens a_2 \tens a_2' &:    -f_3 (1+\tau_3)\sigma_3' - f'_2  \tau_2  \tau_2' + f_2 = 0, \\
    a_3 \tens a_3' \tens a_3 &:     -f_3 \tau_3 \tau_3' - f'_2  \sigma_2 (\tau_2' + 1) + f'_3 = 0, \\
    a_3 \tens a_4 \tens a_4' &:     -f_3 \tau_3 \sigma_{3}' + f_4 = 0, \\
    a_3' \tens a_2' \tens a_2 &:    -f'_3 (1+\tau_3)\sigma_3' + f'_2 = 0, \\
    a_3' \tens a_3 \tens a_3' &:    -f'_3 \tau_3 \tau_3' - f_4 (\tau_4 + 1) \sigma_{4}' + f_3 = 0, \\
    a_4 \tens a_4' \tens a_4 &:     -f'_3 \tau_3 \sigma_{3}' - f_4 \tau_{4} \tau'_4 + f'_4 = 0, \\
    a_4' \tens a_3' \tens a_3 &:    -f'_4 (\tau_4 + 1) \sigma_{4}' + f'_3 = 0, \\
    a_4' \tens a_4 \tens a_4' &:    -f'_4 \tau_{4} \tau'_4 + f_4 = 0, \\
    a_2 \tens a_2' \tens a_1' &:    -f'_1 \tau_1 (\tau_1'+1) - f_2  \tau_2  \sigma_2' = 0, \\
    a_1' \tens a_1 \tens a_2 &:     -f'_1 \sigma_1  \tau_1' - f_2 (\tau_2 + 1)  \tau_2' = 0, \\
    a_3 \tens a_3' \tens a_2' &:    -f_3 \sigma_3' \tau_{3} - f'_2  \tau_2 (\tau_2' + 1) = 0, \\
    a_2' \tens a_2 \tens a_3 &:     -f_3 (\tau_3+1)\tau_3' - f'_2  \sigma_2  \tau_2' = 0, \\
    a_4 \tens a_4' \tens a_3' &:    -f'_3 (\tau_3 + 1) \sigma_{3}' - f_4 \tau_{4} \sigma_{4}' = 0, \\
    a_3' \tens a_3 \tens a_4 &:     -f'_3 \tau_3 (\tau_3' + 1) - f_4 (\tau_4 + 1) \tau'_4 = 0;
\end{align*}
\[ g = h_1 (\phi_1 a_1 \tens a_1' + \epsilon a_1'\tens a_1) + h_2 (\phi_2 a_2 \tens a_2' + \epsilon a_2'\tens a_2) + h_3 (\frac{1}{\phi_2} a_3 \tens a_3' + \epsilon a_3'\tens a_3) + h_4(\frac{1}{\phi_1} a_4 \tens a_4' + \epsilon a_4'\tens a_4), \]
where
\[\phi_1=\sqrt{3},\quad \phi_2={2\over\sqrt 3}\]
and $\eps$ is a sign. In the $*$-algebra case over $\C$ we need $h_i$ real and $|s|=1$ for the reality of the metric and for the connection to be $*$-preserving.

The solutions are then as follows. The parts which looks similar to the $A_4$ case are\begin{align*}
    \tau_1& = s, \quad
    \sigma_1 = \frac{{h_2}\epsilon\phi_2 }{{h_1}   (\epsilon\phi_1  +{1\over s})}, \quad
    \tau_1' = \frac{1}{\epsilon\phi_1  s}, \quad  \tau_2 &= -1 + \frac{\epsilon\phi_2}{\epsilon\phi_1 +   s},\quad
    \sigma_2' = \frac{ h_1  }{h_2\epsilon \phi_2}(\epsilon\phi_1+s )
\end{align*}
and the remainder explicitly are
\[ \tau_3=\frac{-2 \epsilon\phi_1  s+\epsilon\phi_1 +s-2}{2\epsilon \phi_1  (s-2)-4 s+2}, \quad \tau_4=\frac{\epsilon\phi_1  (s-2)-4 s+2}{\epsilon\phi_1  (4-2 s)+6 s-3},\]
\[ \tau'_2=-{\epsilon\phi_1+s\over 2(\epsilon\phi_1 s+1)},\quad  \tau'_3=\frac{\epsilon\phi_1  (s-2)-2 s+1}{\epsilon\phi_1  (s-2)-6 s+3},\quad \tau'_4=\frac{2 (\epsilon\phi_1 -1) s-\epsilon\phi_1 +4}{\epsilon\phi_1  (s-2)-4 s+2},\]
\[ \sigma_2=\frac{3 h_3  (\epsilon\phi_1  s+1)}{2 h_2  (\epsilon\phi_1  (2 s-1)-s+2)}, \quad \sigma_3=\frac{h_4  (\epsilon\phi_1  (2 s-1)-s+2)}{h_3  (\epsilon\phi_1  (4 s-2)-3 s+6)},\]
\[ \sigma_3'=-\frac{2 h_2  (\epsilon\phi_1  (s-2)-2 s+1)}{3 h_3  (\epsilon\phi_1 +s)}, \quad \sigma'_4=\frac{h_3  (2 \epsilon\phi_1  (s-2)-6 s+3)}{h_4  (\epsilon\phi_1  (s-2)-2 s+1)}.\]

\section{Canonical metrics and QLC for $A_n$ and the half-line $\N$}\label{secn}

Here, we solve the system of equations for a quantum Riemannian geometry in general, building on our experience for small $n$.

\subsection{Summary of computer results for $n\le 8$}

\begin{table}\begin{tabular}{|c|c|c|c|c|c}\hline $n$ & ${\rm dim}\Omega^i$ & metrics with QLC & QLC  & $*$-QLC \\
\hline\hline
2 & 2:2:0:0 & $\eps, h_1 $ & $s$   &$ |s|=1$\\
\hline
3 & 3:4:1:0 &  $\eps, h_1,h_2$ & $s$  & $|s|=1$\\ \hline
4&4:6:2:0 & $\eps, \eps', h_1, h_2, h_3$ & $s$  & $|s|=1$ \\ \hline
5&5:8:3:0 & $\eps, h_1, h_2, h_3,h_4$ & $s$  & $|s|=1$ \\ \hline
$n$ & $n:2(n-1):n-2:0 $ & $\eps,\eps', h_1,..., h_{n-1}$ & $s$ &  $|s|=1$ \\ \hline
\end{tabular}
\caption{Summary of vector space dimensions of the exterior algebra and the parameterisation of quantum Riemannian geometries found on $A_n$ for $n\le 5$. \label{table0}}
\end{table}

We summarise the results so far as the first entries in Table~\ref{table0},  
where $h_i$ are real variables for the reality conditions if we work over $\C$, $\eps$ a sign and $\eps'$ is a discrete parameter (not necessarily a binary choice) indicating a discrete moduli for certain numerical `direction coefficients' $\{\phi_i\}$. The results found so far then fit the general  format
\begin{equation}\label{ghphi} g=\sum_{i=1}^{n-1} h_i( \phi_i a_i\tens a_i'+\eps a_i'\tens a_i);\quad \phi_{n-1}={1\over\phi_1},\quad \phi_{n-2}={1\over \phi_2}\end{equation}
etc., as depicted in Figure~\ref{metric}. This means that only the first $\phi_1,\cdots,\phi_{\lfloor {n\over 2}\rfloor}$ have to be specified, the rest are inverse, and that in the even case $\phi_{n\over 2}^2=1$. We also note that
\[ h_i\mapsto -h_i,\quad \phi_i\mapsto -\phi_i,\quad \eps\mapsto -\eps\]
is a symmetry of the metric in the odd case. Hence, without loss of generality, we may assume that $\phi_{n\over 2}=1$ in the even case and in the odd case we do not need to list both a solution for $\{\phi_i\}$ and their negation.  We then solved by computer for all remaining $n\le 8$ and found that all solutions fit this general format with $\{\phi_i\}$ summarised in Table~\ref{table1}. Due to the above symmetry, for $A_2,A_3,A_5$ we do not list a discrete moduli of  $\{\phi_i\}$ as this can be absorbed in a change of sign of the $h_i$ and $\eps$, while in the other cases we list them as separate rows. The solutions involve square roots (as we saw up to $A_5$) or are roots of higher order polynomials. But with some work, we  recognised all entries in a trigonometric form. For example, in the $7^-$ row 
\[\sqrt{2-\sqrt{2}}=2 \cos(\frac{3 \pi }{8}),\quad \sqrt{1-\frac{1}{\sqrt{2}}}=\sqrt{2} \sin(\frac{\pi }{8}),\quad \sqrt{4+2\sqrt{2}}=\csc({\pi\over 8})\]
 and so forth. From these `experimental' results in Table~\ref{table1}, we make the following observations:

\begin{figure}
\[ \includegraphics[scale=0.65]{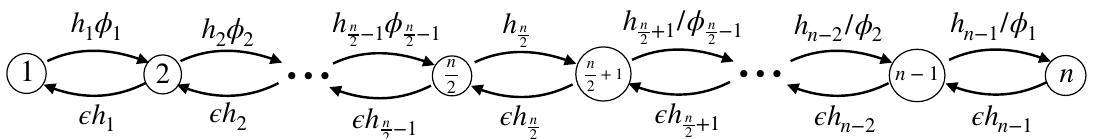}\]
\caption{Metrics on $A_n$ admitting a QLC have at the top square-lengths $h_i \phi_i$ travelling inwards to half way then with the inverse of the $\phi_i$ when travelling outward to the right. At the bottom, the square-lengths are $\eps h_i$  with  $\eps=1$ the physical choice.  We show the case of even $n$; the odd case is similar. \label{metric}}
\end{figure}

\begin{table}
\begin{tabular}{|c|c|c|c|c|c|c|c|}\hline $n$ & $\phi_1$ & $\phi_2$ & $\phi_3$ & $\phi_4$  &  $\phi_5$ & $\phi_6$ &  $\phi_7$ \\
\hline\hline
2\  & 1  \\ \hline
3\  & $2\cos({\pi\over 4})$ &$1\over\phi_1$    \\ \hline
$4^+$ & $2\cos({\pi\over 5})$ &1 & $1\over\phi_1$ \\
$4^-$  & $-2\cos({2\pi\over 5})$ & 1 &   \\ \hline
5\  & $2\cos({\pi\over 6})$ & $\sec({\pi\over 6})$ &$1\over \phi_2$ &$1\over \phi_1$   \\ \hline
$6^+$ & $2 \cos({\pi\over 7})$  & $2 \cos({2\pi\over 7})$&1 & $1\over\phi_2$& $1\over\phi_1$\\
$6^0$ &   $2 \cos({3\pi\over 7})$  & $-2 \cos({\pi\over 7})$ &1 & &  \\
$6^-$ & $-2 \cos({2\pi\over 7})$  & $-2 \cos({3\pi\over 7})$&1 & & \\  \hline
$7^+$ & $2 \cos({\pi\over 8})$ &  $\sqrt{2}\sin({3\pi\over 8})$ & $\csc({3\pi\over 8}) $& $1\over\phi_3$&$1\over\phi_2$ &$1\over\phi_1$ \\
$7^-$ &   $2 \cos({3\pi\over 8})$ & $-\sqrt{2}\sin({\pi\over 8})$& $\csc({\pi\over 8})  $& & &  \\ \hline
8\ (1) & $2 \cos \left(\frac{\pi }{9}\right)$  & $1+2 \cos \left(\frac{4 \pi }{9}\right)$ & $\frac{2 \sin \left(\frac{4\pi }{9}\right)}{\sqrt{3}}$ &1 &$1\over\phi_3$&$1\over\phi_2$ &$1\over\phi_1$  \\
8\ (2) & $-2 \cos \left(\frac{4 \pi }{9}\right)$ &$1+2 \cos \left(\frac{2\pi }{9}\right)$ & $\frac{2 \sin \left(\frac{2 \pi }{9}\right)}{\sqrt{3}}$
 &1 & & & \\
 8\ (3) & $-2 \cos \left(\frac{4 \pi }{9}\right)$ & $1+2 \cos \left(\frac{2\pi }{9}\right)$ & $-\frac{2 \sin \left(\frac{2 \pi }{9}\right)}{\sqrt{3}}$& 1& & &\\
8\ (4) & $-2 \cos \left(\frac{2\pi }{9}\right)$& $1-2 \cos \left(\frac{\pi }{9}\right)$ & $-\frac{2 \sin \left(\frac{ \pi }{9}\right)}{\sqrt{3}}$ &1 & & &  \\ \hline
\end{tabular}
\caption{Table of allowed direction coefficients $\phi_i$ for $n\le 8$\label{table1}}
\end{table}

\begin{remark}

(1) In all cases in the table, we find
\[\phi_{i+1}=\phi_1-{1\over\phi_i},\quad i=1,2,3,4,5,6,7\]
 {\em except for 8(2)} where this holds for $i=1$ but not for $i=2$, for example.

(2) For each $n$ in the table, there is a {\em unique} solution with $\phi_i>0$, shown in the first row. These are reproduced  by  the single formula
\begin{equation}\label{phiphys} \phi_i= {\sin({(i+1) \pi\over n+1})\over \sin({i\pi\over n+1})}={(i+1)_q\over(i)_q};\qquad (i)_q:= {q^{i}-q^{-{i}}\over q^{}-q^{-{1}}};\quad q=e^{\pi \imath\over n+1}\end{equation}
in terms of symmetric $q$-integers. Here $i=1,2,\cdots,n-1$ and the values of $\phi_i$ obey
\[ 2>\phi_1>\phi_2>\phi_3>\cdots > \phi_{\lfloor{n\over 2}\rfloor}\ge 1\]
 with equality in the even case, i.e.  $\phi_i$ decreases from $\phi_1$ at the endpoint towards 1 as we approach the midpoint (and equals $1$  at the midpoint in the even case). After that, the $1/\phi_i$ follows the same pattern going back up to the other endpoint.

 This, along with $\eps=1$ and $h_i>0$, is the {\em unique physical form of the metric} for each $n$ in the sense of positive metric coefficients at each link. One might be able to give an interpretation of negative values as Lorentzian\cite{Ma:sq} but this does not seem reasonable in the present case where all the links are in a line. The metric coefficient or `square-length' is  $h_i\phi_i$ going from $i\to i+1$ and $h_i$ going from $i+1\to i$, and the `direction coefficients' $\phi_i$ is the ratio of these. The above says that there is a longer `square length' travelling into the bulk compared to travelling outward and that this ratio is most at the endpoints and tends to or is 1 in the middle.

(3) In the limit $n\to\infty$, the physical choice in (2) tends to $\phi(i)={i+1\over i}$ as in Figure~\ref{endpoint}. The values of $\phi_i$ for finite $n$ approach these from below and we see that the finite $n$ QRG is a $q$-deformation (for $q$  a root of unity) of the $n\to \infty$ theory.

(4) For each $n$, the unique positive value of $\phi_1$ are roots of a certain polynomial  and $\phi_2$ determined by (1) are roots of a similar polynomial of the same degree. The other allowed values of $\phi_1,\phi_2$ are then all joint solutions of (1) and these two polynomials, modulo the global symmetry mentioned above.

(5) For every allowed quantum metric for  $n\le 8$,  i.e. for every row in the table, there is a {\em unique form of QLC} up to the value of $\tau_1=s$, which is a free parameter required to obey $|s|=1$ for the connection to be $*$-preserving.
\end{remark}

It is expected that the above patterns hold up for all $n$. In particular, it is clear that over $\C$ and with the required `reality' structures, there should be a unique allowed form of quantum metric with positive square-lengths given by free parameters $h_1,\cdots,h_{n-1}>0$ and `direction coefficients' prescribed by (\ref{phiphys}).

\subsection{General solution for the $A_n$ and $\N$}

We now solve for the quantum Riemannian geometry in general $A_n$ motivated by our experience for $n\le 8$, which also serves as a check.  We consider a general metric with coefficients
\[ f_i=\phi_i h_i,\quad f'_i =\eps h_i ,\quad h_i,\phi_i\ne 0,\quad \eps=\pm1    \]
for the metric weights as in (\ref{ghphi}) for increasing and decreasing arrows respectively.

Next, the general form of $\sigma$  is forced by the bimodule map property and  after including the torsion equation, but not yet solving for metric compatibility, to be of the form
\begin{align}
\begin{split}
        \label{eq:8-preservings}
    \sigma(a_i \tens a_{i+1})=\sigma_i a_i\tens a_{i+1},\quad i=1,2,\cdots,n-2,\\
    \sigma(a_i'\tens a'_{i-1})= \sigma'_i a_i'\tens a'_{i-1},\quad i=2,3,\cdots, n-1, \\
\end{split}\\
\begin{split}
    \label{eq:8-preserving1}
    \sigma(a_1\tens a_1')=\tau_1 a_1\tens a_1',    \\
\end{split}\\
\begin{split}
   \label{eq:8-preservingt}
    \sigma(a_i\tens a_i')=\tau_ia_i\tens a_i'+ (1+\tau_i)a_{i-1}'\tens a_{i-1},\quad i=2,\cdots,n-1,\\
    \sigma(a_i'\tens a_i)=\tau_i'a_i'\tens a_i+ (1+\tau_i')a_{i+1}\tens a_{i+1}',\quad i=1,\cdots,n-2,   \\
\end{split}\\
\begin{split}
        \label{eq:8-preservingn}
    \sigma(a'_{n-1}\tens a_{n-1})=\tau_{n-1}'a'_{n-1}\tens a_{n-1},    \\
\end{split}
\end{align}
where the $4(n-2)+2$ parameters are organised into four families with $n-2$ values each for $\sigma_i,\sigma_i'$ and $n-1$ values each for $\tau_i,\tau_i'$ as shown.  The pattern here is that 2-steps in the same line just have one constant as do the back-and-forth steps at the ends where one can only step one way, but when one can go back-and-forth both to the left and to the right, $\sigma$ of one of these is a mixture of both possibilities.

We provide an inductive proof now that there is a QLC for all $n$, limiting attention to metrics of the form (\ref{ghphi}) with $\phi_1,\cdots,\phi_{n-1}$ initially unknown and $\eps=\pm1$ arbitrary, and solving for the connection coefficients. An easier analysis for a 1-way graph line (which is not our case and cannot admit an invertible quantum metric) was in \cite[Exercise~8.1]{BegMa}. In fact, it pays to consider the equations of `$A_\infty$' i.e. the natural numbers $\Bbb N$ as a discrete half-line and obtain any $A_n$ of interest by truncation.

\begin{proposition}\label{propN} For any quantum metric on the natural numbers $\N$ described by $h_i,\eps$, there is a 1-parameter moduli of allowed direction coefficients $\phi_i$ and a 1-parameter family of QLCs as defined iteratively by
\[ \phi_{i+1}=\phi_1-{1\over\phi_i},\quad \tau_{i+1}=- 1 + {\phi_{i+1}\over\phi_i+\eps\tau_i}\]
for arbitrary $\phi_1,\tau_1\ne 0$. The other connection coefficients are then given by
\[ \sigma_{i}={h_{i+1}\phi_{i+1}\over h_{i}\phi_{i}(1+\tau'_{i})},\quad \sigma'_i={h_{i-1}( \phi_{i-1}+\eps\tau_{i-1})\over h_i\phi_i},\quad\tau'_i=\eps{\phi_i-\phi_{i+1}\over\tau_i}.\]
Moreover, for $h_i,\phi_1$ real, the connection is $*$-preserving iff $|\tau_1|=1$.
\end{proposition}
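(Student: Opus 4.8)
The plan is to treat the metric coefficients $h_i,\phi_i$ as background data and solve the metric-compatibility equations (\ref{metcon}) together with torsion-freeness, recasting everything as a system for the braiding and connection coefficients $\tau_i,\tau'_i,\sigma_i,\sigma'_i$ appearing in the ansatz (\ref{eq:8-preservings})--(\ref{eq:8-preservingn}). Since the calculus is inner with $\theta=\sum_i(a_i+a_i')$ and $\alpha=0$ in the small cases, I would first verify (or assume from the stated structure of $\sigma$) that the connection takes the inner form $\nabla=\theta\tens(\ )-\sigma((\ )\tens\theta)$, so that the only unknowns are the braiding parameters. Torsion-freeness $\wedge\sigma=-\wedge$ on $\Omega^2_{min}$, using the relations (\ref{pirel}), is what forces the mixing terms $(1+\tau_i)a'_{i-1}\tens a_{i-1}$ and $(1+\tau'_i)a_{i+1}\tens a'_{i+1}$ in $\sigma$ — this pins down the coefficients of the off-diagonal terms to be $1+\tau_i$ and $1+\tau'_i$ rather than free, as already displayed in (*).

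Next I would expand the metric-compatibility condition $\nabla g=0$ in the basis of 3-step paths $a_i\tens a'_i\tens a_i$, $a_i\tens a_{i+1}\tens a'_{i+1}$, etc., exactly as in the $A_3,A_4,A_5$ worked cases, obtaining a coupled system that for each $i$ relates consecutive coefficients. The key observation, read off from the $n\le 8$ data in Remark (1), is that the ``diagonal'' equations decouple into a recursion purely in the $\phi_i$, namely $\phi_{i+1}=\phi_1-1/\phi_i$, independent of $h_i$ and $\tau_i$; I would derive this by isolating the equations coming from the two paths $a_i\tens a'_i\tens a_i$ and $a'_i\tens a_i\tens a'_i$ and eliminating the $\sigma,\sigma'$ variables. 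Once the $\phi$-recursion is established, the remaining equations give $\tau'_i=\eps(\phi_i-\phi_{i+1})/\tau_i$ and the stated recursion $\tau_{i+1}=-1+\phi_{i+1}/(\phi_i+\eps\tau_i)$ by back-substitution, and the cross equations (from the mixed paths such as $a_1\tens a_1\tens a_2$) then solve algebraically for $\sigma_i,\sigma'_i$ in terms of the already-determined quantities, yielding the displayed formulae. The main obstacle I anticipate is the bookkeeping of showing that the \emph{same} closed-form recursion closes consistently at every node on the infinite half-line $\N$ — i.e.\ that no equation overdetermines the system and that the iteratively defined $\tau_{i+1}$ never forces a division by zero given $\phi_1,\tau_1\ne 0$; I would handle this by an induction on $i$, checking that the inductive hypothesis (all coefficients up to level $i$ determined and nonzero) propagates, with the $\N$ case avoiding the boundary closure relations $\phi_{n-1}=1/\phi_1$ that complicate the finite $A_n$ truncation.

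Finally, for the reality claim, I would use the $*$-preserving criterion from the end of Section~\ref{secoutline}: with $\theta^*=-\theta$ one needs $(\dagger\circ\sigma)^2=\id$ and $\sigma\circ\dagger\circ\alpha=\alpha\circ*$, the latter being vacuous since $\alpha=0$. Writing out $(\dagger\circ\sigma)^2=\id$ on the generator $a_1\tens a'_1$, using $a_i^*=-a'_i$ and the definition of $\dagger$ in (\ref{realgnab}), reduces — as in the $A_2$ calculation where it gave $|\tau_1|=1$ exactly — to the single condition $\tau_1\bar\tau_1=1$ once the real-$h_i$, real-$\phi_1$ hypotheses are imposed and the downstream coefficients are expressed through the recursion; I expect the remaining $\dagger$-compatibility relations at higher nodes to follow automatically from $|\tau_1|=1$ and the reality of $\phi_i$ (which is inherited from real $\phi_1$ through the rational recursion), so that $|\tau_1|=1$ is both necessary and sufficient.
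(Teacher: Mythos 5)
Your main construction follows the paper's own route: the same ansatz for $\sigma$ (with the mixing coefficients $1+\tau_i$, $1+\tau'_i$ forced by the bimodule property and torsion-freeness, and $\alpha=0$), expansion of $\nabla g=0$ over 3-step paths, elimination of the $\sigma,\sigma'$ variables from the equations at $a_i\tens a'_i\tens a_i$ and $a'_i\tens a_i\tens a'_i$ (together with the cross equations at $a_{i-1}\tens a_i\tens a'_i$ and $a'_i\tens a'_{i-1}\tens a_{i-1}$, which you need in order to remove the combinations $\sigma_{i-1}(\tau'_{i-1}+1)$ and $(\tau_{i+1}+1)\sigma'_{i+1}$), and then back-substitution for $\tau'_i,\sigma_i,\sigma'_i$. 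One point your sketch glosses over: this elimination does not directly give $\phi_{i+1}=\phi_1-{1\over\phi_i}$ but the three-term recursion $\phi_{i+1}=\phi_i-{1\over\phi_i}+{1\over\phi_{i-1}}$; the stated two-term form then follows by induction, using that the $i=1$ equations have no $\phi_0$ term. This is routine but should be said, since the two-term form is what makes the moduli one-dimensional in $\phi_1$.

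The genuine gap is in the ``iff'' of the $*$-preservation claim. Necessity of $|\tau_1|=1$ is indeed the $A_2$-type computation on $a_1\tens a_1'$. But sufficiency is not automatic: $(\dagger\circ\sigma)^2=\id$ must hold on \emph{every} generator, and this produces an infinite family of conditions, one per node. On $a_i\tens a_{i+1}$ it reads $\overline{\sigma_i}=1/\sigma'_{i+1}$, and on the mixed generators $a_i\tens a'_i$, $a'_i\tens a_i$ it reads $|\Delta_i|^2=1$ with $\Delta_i=(1+\tau_{i+1})(1+\tau'_i)-\tau_{i+1}\tau'_i$. These do not follow ``automatically'' from $|\tau_1|=1$ and reality of the $\phi_i$ by any formal argument; one must propagate a quantitative invariant along the recursion. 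The paper's proof does exactly this: it shows by induction, using $\tau_{i+1}=-1+\phi_{i+1}/(\phi_i+\eps\tau_i)$ and the $\phi$-recursion, that $|\tau_i|^2=1-\phi_i/\phi_{i-1}$ for all $i\ge 2$ (base case $|\tau_1|=1$), and then both families of conditions above reduce to precisely this identity. Note that this invariant is itself nontrivial — the moduli of the $\tau_i$ are rigidly determined by the metric data even though the phase of $\tau_1$ stays free — and without identifying it (or an equivalent one), your ``I expect the remaining conditions to follow'' asserts rather than proves the hardest part of the statement. Everything else in your proposal matches the paper's argument.
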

\proof Writing out the equations for metric compatibility, these break down into groups of increasing vertex. The first group is
\begin{align*}
    -f_1 &\tau_1 \tau_1' +  f'_1 = 0 \\
    -f'_1 &\tau_1 \tau_1' +  f_1 - [ f_2 (\tau_2 +1) \sigma_2']= 0
\end{align*}
(this is the same as we saw for $A_2$ except there we do not have the square bracket term because we do not have $f_2,\tau_2,\sigma'_2$). The next group are  6 more equations for the 4 new variables $\sigma_1,\sigma'_2,\tau_2,\tau'_2$ and 2 new parameters $f_2,f_2'$
\begin{align*}
    -f_1& \sigma_1 (\tau_1' +1) +  f_2 = 0\\
    -f'_1&  \sigma_1 (\tau_1' +1) -  f_2  \tau_2 \tau'_2 +  f'_2  = 0\\
    -f'_2&  (\tau_2+1) \sigma'_2 +  f'_1  = 0\\
    -f'_2&  \tau_2  \tau'_2 +  f_2  -  [f_3(\tau_3 + 1)\sigma'_3] = 0\\
    -f'_1&  \tau_1 (\tau'_1 +1) -  f_2  \tau_2  \sigma'_2= 0 \\
    -f'_1&  \sigma_1 \tau'_1 -  f_2  (\tau_2  +1) \tau'_2= 0
\end{align*}
(the equations so far are the  same as we saw for $A_3$ except that there we do not have the square bracket term since there are no variables $f_3,\tau_3,\sigma'_3$). Similarly we have 6 more equations for the 4 new variables $\sigma_2,\sigma'_3,\tau_3,\tau'_3$ and 2 new parameters $f_3,f'_3$
\begin{align*}
  -f_2& \sigma_2 (\tau'_2 +1) +  f_3 = 0\\
    -f'_2&  \sigma_2 (\tau'_2 +1) -  f_3  \tau_3 \tau'_3 +  f'_3  = 0\\
    -f'_3&  (\tau_3+1) \sigma'_3 +  f'_2  = 0\\
    -f'_3&  \tau_3  \tau'_3 +  f_3  -  [f_4(\tau_4 + 1)\sigma'_4] = 0\\
    -f'_2&  \tau_2 (\tau'_2 +1) -  f_3  \tau_3  \sigma'_3= 0 \\
    -f'_2&  \sigma_2 \tau'_2 -  f_3  (\tau_3  +1) \tau'_3= 0
\end{align*}
(the equations so far are the same as we saw for $A_4$ except that there we do not have the square bracket term since no variables $f_4,\tau_4,\sigma'_4$).

The general induction step here is to add a set of six equations for the 4 new variables $\sigma_{i-1},\sigma'_{i},\tau_{i},\tau'_{i}$ and 2 new parameters $f_{i},f'_{i}$,
\begin{align*}
    -f_{i-1}& \sigma_{i-1} (\tau_{i-1}' +1) +  f_i = 0\\
    -f'_{i-1}&  \sigma_{i-1}(\tau_{i-1}' +1) -  f_i  \tau_i \tau'_i +  f'_i  = 0\\
    -f'_i&  (\tau_i+1) \sigma'_i +  f'_{i-1}  = 0\\
    -f'_i&  \tau_i  \tau'_i +  f_i  -  [f_{i+1}(\tau_{i+1} + 1)\sigma_{i+1}'] = 0\\
    -f'_{i-1}&  \tau_{i-1} (\tau'_{i-1} +1) -  f_i  \tau_i  \sigma'_i= 0 \\
    -f'_{i-1}&  \sigma_{i-1} \tau_{i-1}' -  f_i  (\tau_i  +1) \tau'_i= 0
\end{align*}
(which to this point would be the equations for $A_{i+1}$, except for $A_{i+1}$ itself we would not have the square bracket equations due to no $f_{i+1},\tau_{i+1},\sigma'_{i+1}$). This covers the half-line case, and we also noted for later how to extract the $A_n$ solutions from the same analysis.

To solve the system, we rewrite the above $i$th step equations
as
\[ (\tau_i+1)\sigma'_i={f'_{i-1}\over  f'_i},\quad \tau_i\tau'_i={f_i\over  f'_i}- {f_{i+1}\over  f'_{i+1}},\quad\sigma_{i-1}(\tau'_{i-1}+1)={f_i\over f_{i-1}}\]
\[  f_i\sigma'_i-f'_{i-1}\tau_{i-1}=f_{i-1},\quad f'_{i-1}\sigma_{i-1}- f_i\tau'_i=f'_i,\quad {f_i \over f'_i}-{ f_{i+1}\over f'_{i+1}}={f'_i\over f_i }- {f'_{i-1}\over f_{i-1}}\]
If we write $f_i=h_i\phi_i$ and $f'_i= \eps h_i$ for some unknown $\phi_i$ and $\eps = \pm1$  then the last equation is
\[ \phi_{i+1}=\phi_i-{1\over\phi_i}+ {1\over \phi_{i-1}}\]
which, starting off without the $\phi_{i-1}$ term, gives the iteration equation for $\phi_i$ as stated.

Also, from the first and 4th of these $i$th step equations, we get
\[ \tau_{i+1}=-1+{f'_{i}\over f'_{i+1}\sigma'_{i+1}}=-1+{f'_{i}f_{i+1}\over f'_{i+1}(f_i+f'_i\tau_i)}=-1+{\phi_{i+1}\over \phi_i+\eps\tau_i}\]
as stated. Similarly, from the 3rd and the 5th, we get
\[  \tau'_i={f'_{i-1}\sigma_{i-1}-f'_i\over f_i}={f'_{i-1}\over f_{i-1}(1+\tau'_{i-1})}={\eps\over \phi_{i-1}(1+\tau'_{i-1})}-{\eps\over\phi_i}\]
as another (redundant) recursion relation. The 3rd, 4th and 2nd moreover give
\[ \sigma_{i-1}={f_i\over f_{i-1}(1+\tau'_{i-1})},\quad \sigma'_i={f_{i-1}+f'_{i-1}\tau_{i-1}\over f_i},\quad \tau'_i=\eps{\phi_i-\phi_{i+1}\over\tau_i}\]
which can be used to determine $\sigma_{i-1},\sigma'_i$ and obtain $\tau'_i$ from $\tau_i$ as stated (the two sequences are compatible with this relation).

The stated iterative equations have a unique solution given initial values of $\phi_1,\tau_1$. The $\tau_1'$ is determined as
\[ \tau'_1={1\over\tau_1}(\phi_1-\phi_1+{1\over\phi_1})= {1\over\tau_1 \phi_1}.\]

For the last part,  for the connection to be $*$-preserving, we apply $\sigma^{-1}\circ \dagger = \dagger \circ \sigma$ to the relations (\ref{eq:8-preserving1}) and (\ref{eq:8-preservingn}) obtaining the conditions $|\tau_1| = |\tau_{n-1}'| = 1$. Applying the $*$-preserving conditions to (\ref{eq:8-preservings}), we require $\overline{\sigma_i} = 1/\sigma_{i+1}'$ which from their form in the proposition holds if
\[ |\tau_i|^2 = 1 -{ \phi_i\over \phi_{i-1}}.\]
Here, we drop the last term for $i = 1$. This is solved with no conditions beyond $|\tau_1| = 1$, as we prove by induction: if the condition holds for $|\tau_i|^2$ then the recurrence relation for $\tau_{i+1}$ implies that $|\tau_{i+1}|^2 = 1 - \phi_{i+1}/\phi_{i}$ as expected. Similarly, the $*$-preserving conditions for (\ref{eq:8-preservingt}) requires  $|\Delta_i|^2 = 1$, where $\Delta_i = (1+\tau_{i+1})(1+\tau_i') - \tau_{i+1}\tau_i'$. Using the form of $\tau'_i$, this reduces to $|\tau_i|^2 = 1 - \phi_i/\phi_{i-1}$ again.
 \endproof

 \begin{figure}
 \[ \includegraphics[scale=0.67]{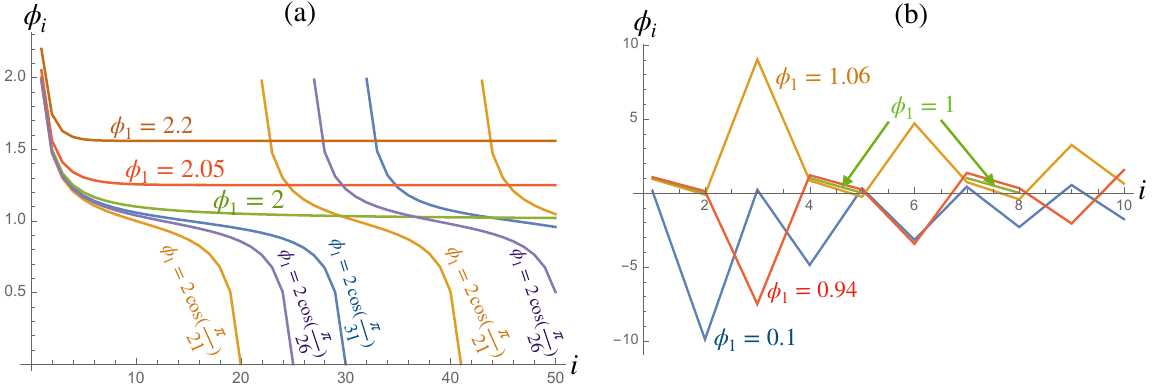}\]
 \caption{(a) $\phi_1\ge 2$ leads to $\phi_i$ asymptotically constant  while $\phi_1<2$ leads to $\phi_i$ oscillatory. Here $\phi_1=2\cos({\pi\over n+1})$ is suitable for $A_n$ and its $\phi_i$ blows up at $i$ a multiple of $n+1$. (b) Smaller $\phi_1$ including $\phi_1=2\cos({\pi\over 3})=1$ and perturbations of it.\label{phiphases}}
 \end{figure}

The iteration for $\phi_i$ here can be done in closed form. If $\phi_1=x$, then
 \[ \phi_i=\frac{1}{2} \left(\sqrt{x^2-4} \left(\frac{1}{\frac{1}{2}-\frac{1}{2} \left(-\sqrt{x^2-4}-x\right)^{-i} \left(\sqrt{x^2-4}-x\right)^i}-1\right)+x\right),\]
 \[ \phi_2=x-{1\over x},\quad \phi_3=\frac{x \left(x^2-2\right)}{x^2-1},\quad \phi_4=\frac{ \left(x^4-3 x^2+1\right)}{x \left(x^2-2\right)},\quad \phi_5=\frac{x \left(x^4-4 x^2+3\right)}{x^4-3 x^2+1},\]
etc.,  and we then use this solution to determine the recursion relations for $\tau_i$. We see from $\phi_3$ that demanding {\em edge-symmetry where $\phi_i=1$ at all $i$, is not an option}. Qualitatively, we see from Figure~\ref{phiphases} that  there are two phases for the system:

\begin{enumerate} \item $\phi_1\ge 2$ (Open phase):  Here $\phi_i$ decays rapidly to an asymptote $\frac{1}{2} \left(\sqrt{\phi_1^2-4}+\phi_1\right)$. This case leads to solutions on the half-line graph $\N$.

 \item $0<\phi_1<2$ (Finite phase): Here $\phi_i$ is oscillatory, could have zeros and singularities and be periodic for certain $\phi_1$ (as illustrated). This case leads to solutions on $A_n$ as a subgraph of $\N$. \end{enumerate}

The critical line started by $\phi_1=2$ between these two regions is particularly simple. It can be approached from either side but more naturally from above.

\begin{proposition}\label{canN} For the half-line graph $\N$, the canonical choice of direction coefficients is given by $\phi_1=2$. If the metric $h_i$ and initial $\tau_1$  are rational then all coefficients of the quantum geometry are rational. In particular, for initial $\tau_1=s=\pm 1$ and $\eps=1$,
\[ \phi_i={i+1\over i},\quad \tau_i={s(-1)^{i-1} \over i};\quad \tau'_i= - \tau_{i+1},\quad  \sigma_i={h_{i+1}\over h_i}(1+\tau_{i+1}),\quad \sigma'_i= {h_{i-1}\over h_i}{1\over 1+\tau_i}.  \]
We refer to this as the {\em canonical quantum Riemannian geometry of  $\N$}.
\end{proposition}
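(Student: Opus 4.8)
The plan is to treat this as a direct specialisation of Proposition~\ref{propN}: the word ``canonical'' merely fixes the choice $\phi_1=2$ on the critical line between the two phases, together with $\eps=1$ and $\tau_1=s=\pm1$, so nothing new needs to be solved. What remains is to feed these initial data into the iterative formulas of Proposition~\ref{propN} and verify the stated closed forms by induction on $i$. Since $|\tau_1|=|s|=1$, the $*$-preserving criterion of Proposition~\ref{propN} holds automatically, so the resulting QLC is $*$-preserving for real $h_i$.

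First I would establish $\phi_i=(i+1)/i$ by induction. The base case is $\phi_1=2=2/1$, and the recursion $\phi_{i+1}=\phi_1-1/\phi_i=2-i/(i+1)=(i+2)/(i+1)$ closes the induction (equivalently this is the $x\to2$ value of the closed-form expression for $\phi_i$ displayed before the proposition). As a by-product I would record the telescoping identity
\[ \phi_i-\phi_{i+1}={i+1\over i}-{i+2\over i+1}={1\over i(i+1)},\]
which will be used repeatedly below.

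The substantive step is the formula for $\tau_i$. Writing $\eta_i:=s(-1)^{i-1}\in\{\pm1\}$, so that $\eta_i^2=1$ and $\eta_{i+1}=-\eta_i$, the claim is $\tau_i=\eta_i/i$. With $\eps=1$ the recursion reads $\tau_{i+1}=-1+\phi_{i+1}/(\phi_i+\tau_i)$. Substituting the induction hypothesis and clearing denominators, the numerator of $\tau_{i+1}$ collapses, using $\eta_i^2=1$, to $-(1+(i+1)\eta_i)$ over $(i+1)(i+1+\eta_i)$, and the same identity $\eta_i^2=1$ identifies this with $-\eta_i/(i+1)=\eta_{i+1}/(i+1)$, as required. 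This sign bookkeeping --- the repeated collapse of cross terms via $s^2=1$ and $(-1)^{i-1}+(-1)^i=0$ --- is the only delicate part of the argument; it is precisely the choice $\tau_1=\pm1$ that makes it work and yields the clean rational answer.

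Finally I would read off the remaining coefficients by substitution into $\tau'_i=\eps(\phi_i-\phi_{i+1})/\tau_i$, $\sigma_i=h_{i+1}\phi_{i+1}/(h_i\phi_i(1+\tau'_i))$ and $\sigma'_i=h_{i-1}(\phi_{i-1}+\eps\tau_{i-1})/(h_i\phi_i)$. Using the telescoping identity and $\tau_i=\eta_i/i$ gives $\tau'_i=\eta_i/(i+1)=-\tau_{i+1}$; substituting $1+\tau'_i=1-\tau_{i+1}$ into $\sigma_i$ and simplifying with $(i+1)^2-\eta_i^2=i(i+2)$ reduces it to $(h_{i+1}/h_i)(1+\tau_{i+1})$, while the analogous reduction using $(i+\eta_i)(i-\eta_i)=i^2-1$ turns $\sigma'_i$ into $(h_{i-1}/h_i)/(1+\tau_i)$. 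Along the way one checks the denominators $\phi_i+\tau_i$, $1+\tau'_i$ and $1+\tau_i$ (the latter for $i\ge2$, since $\sigma'_i$ is defined only then) are nonzero, which holds because each equals a strictly positive rational. The rationality claim then follows immediately: with $\phi_1=2$ rational every recursion above is a rational map with rational coefficients in the ratios $h_{i\pm1}/h_i$ and in $\tau_1$, and the non-vanishing of these denominators guarantees that rational inputs propagate to rational outputs at every node.
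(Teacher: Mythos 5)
Your proposal is correct and follows essentially the same route as the paper: both treat the proposition as a specialisation of Proposition~\ref{propN}, check that $\phi_i=(i+1)/i$ solves the $\phi$-recursion with $\phi_1=2$, and verify the stated closed forms for $\tau_i,\tau'_i,\sigma_i,\sigma'_i$ against the recursions there. The only real difference is presentational --- where the paper invokes an analytic (Pochhammer-function) solution of the $\tau$-recursion for general $s$ and then simplifies at $s=\pm1$, you verify the $s=\pm1$ closed form directly by induction via the identity $\eta_i^2=1$, which is a self-contained version of the same check; your explicit non-vanishing of denominators and the $i\ge 2$ caveat for $\sigma'_i$ are likewise sound.
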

\proof Here $\phi_1=2$ is best approached for the analytic solution from above but one can see directly that the $\phi_i$ stated has this initial value and solves the required recursion relation. For $\tau_i,\tau'_i$, (which are independent of the metric) the recursion has an analytic solution using Pochhammer functions. For example
\[ \tau_1=s,\quad \tau_2=-\frac{16 s+8}{2 (8 s+16)},\quad\tau_3=\frac{120 s+24}{18 (4 s+20)},\quad \tau_4=-\frac{2016 s+864}{288 (12 s+28)} \]
etc. This simplifies greatly when $|s|=1$, namely if $s=e^{\imath\theta}$, then
\[ \tau_i=-\frac{\left(6 i+(-1)^i+3\right) e^{\imath\theta}+2 i+3 (-1)^i+1}{i \left(\left((-1)^i (2 i+1)+3\right) e^{\imath\theta}+3 (-1)^i (2 i+1)+1\right)}.\]
We show the result when $s=\pm 1$ and also for this case the resulting $\tau'_i,\sigma_i,\sigma'_i$. \endproof

We are not claiming this as unique but it it represents by far the simplest solution.  Moreover, although we typically work over $\C$ in mathematical physics, it is striking that this canonical quantum geometry on $\N$ works over the rational numbers $\Bbb Q$. Finally, we turn to the finite case as promised.

\begin{corollary}\label{canAn}  For the finite interval graph  $A_n$ with metric defined by $h_i,\eps,\phi_i$, there is a quantum Riemannian geometry of the form in Proposition~\ref{propN} provided $\phi_1$ is such that the stated iteration leads to $\phi_n=0$ and all preceding $\phi_i\ne 0$. In this case
\[  \phi_{n-i}={1\over \phi_i}.\]
 The physical choice where $\phi_i>0$ for $i=1,\cdots,n-1$ is provided by $\phi_1=2\cos({\pi\over n+1})$ and results in (\ref{phiphys}). E.g., if  $\tau_1=s=\pm1$ and $\eps=1$ then
\[ \tau_i={s(-1)^{i-1}\over (i)_q},\quad  q=e^{\imath \pi\over n+1}\]
as a q-deformation of the solution on $\N$ in Proposition~\ref{canN} with $\tau'_i,\sigma_i,\sigma'_i$ given in terms of this by the same formulae as there. We refer to this as the {\em canonical quantum Riemannian geometry of  $A_n$}
\end{corollary}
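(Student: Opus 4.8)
The plan is to obtain $A_n$ as a truncation of the half-line $\N$, exactly as flagged at the end of the proof of Proposition~\ref{propN}. First I would observe that the metric-compatibility equations for $A_n$ coincide with the groups displayed there for the indices $i=1,\dots,n-1$, the only difference being that at the right endpoint (vertex $n$) there is no edge $a_n$ and hence no coefficients $f_n,\tau_n,\sigma'_n$. Consequently the bracketed term $[\,f_{i+1}(\tau_{i+1}+1)\sigma'_{i+1}\,]$ drops from the fourth equation of the final ($i=n-1$) group, which then reduces to $-f'_{n-1}\tau_{n-1}\tau'_{n-1}+f_{n-1}=0$, i.e. $\tau_{n-1}\tau'_{n-1}=f_{n-1}/f'_{n-1}=\eps\phi_{n-1}$. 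On the other hand the half-line solution satisfies $\tau_i\tau'_i=\eps(\phi_i-\phi_{i+1})$ throughout; equating the two expressions at $i=n-1$ gives $\eps(\phi_{n-1}-\phi_n)=\eps\phi_{n-1}$, i.e. the closure condition $\phi_n=0$. This identifies the finite case as precisely the truncation of the $\N$ recursion that reaches zero at step $n$, which is the existence criterion asserted; all remaining coefficients are then inherited verbatim from Proposition~\ref{propN}.

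Next I would solve the recursion $\phi_{i+1}=\phi_1-1/\phi_i$ in closed form. Writing $\phi_1=2\cos\theta$, the ansatz $\phi_i=\sin((i+1)\theta)/\sin(i\theta)$ satisfies the initial condition and the recursion, the latter reducing to the product-to-sum identity $2\cos\theta\,\sin((i+1)\theta)=\sin((i+2)\theta)+\sin(i\theta)$. The closure $\phi_n=0$ then forces $\sin((n+1)\theta)=0$, hence $\theta=k\pi/(n+1)$; the largest positive value of $\phi_1$ is the physical branch $k=1$, giving $\phi_1=2\cos(\pi/(n+1))$ and exactly the formula (\ref{phiphys}) since $(i)_q=\sin(i\pi/(n+1))/\sin(\pi/(n+1))$ at $q=e^{\imath\pi/(n+1)}$. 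The symmetry $\phi_{n-i}=1/\phi_i$ is then immediate from $(n+1)\theta=k\pi$: the reflections $\sin((n-i+1)\theta)=-(-1)^k\sin(i\theta)$ and $\sin((n-i)\theta)=-(-1)^k\sin((i+1)\theta)$ carry the same sign, which cancels in the ratio.

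For the connection I would prove the stated formula for $\tau_i$ by induction on $i$, using $\tau_{i+1}=-1+\phi_{i+1}/(\phi_i+\eps\tau_i)$ with $\eps=1$. Substituting $\tau_i=s(-1)^{i-1}/(i)_q$ and $\phi_i=(i+1)_q/(i)_q$, the inductive step collapses to the single $q$-integer identity $(i+1)_q^2-(i+2)_q(i)_q=1$, which I would verify by expanding $(k)_q=(q^k-q^{-k})/(q-q^{-1})$, the numerator difference telescoping to $(q-q^{-1})^2$. Because $s=\pm1$ gives $s^2=1$, the alternating sign $s(-1)^{i-1}$ (which flips at each step) combines correctly, and the $q\to1$ limit $(i)_q\to i$ recovers Proposition~\ref{canN}. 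The same identity also yields $\phi_i-\phi_{i+1}=1/((i)_q(i+1)_q)$, from which $\tau'_i=\eps(\phi_i-\phi_{i+1})/\tau_i=-\tau_{i+1}$ and the remaining $\sigma_i,\sigma'_i$ follow from the identical expressions in Proposition~\ref{propN}; the $*$-preserving property holds since $|s|=1$, as established there.

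The main obstacle is the boundary bookkeeping of the first paragraph: correctly identifying which term disappears at the right endpoint and showing that its absence is equivalent to $\phi_n=0$ rather than to some other relation among the coefficients at node $n-1$. Once this closure is secured, the rest is a matter of recognising the Chebyshev form of the $\phi$-recursion and the elementary $q$-integer identity governing the $\tau$-recursion, both routine. A secondary point worth checking is that the denominators $\phi_i+\tau_i$ arising along the induction never vanish for $1\le i\le n-1$, which follows from $(i)_q>0$ on this range for the physical branch $\theta=\pi/(n+1)$.
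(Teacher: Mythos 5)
Your proposal is correct, and its opening step is exactly the paper's: truncation removes the bracketed term $f_{i+1}(\tau_{i+1}+1)\sigma'_{i+1}$ from the last group of six equations, forcing $\tau_{n-1}\tau'_{n-1}=\eps\phi_{n-1}$, which against the general relation $\tau_i\tau'_i=\eps(\phi_i-\phi_{i+1})$ gives the closure condition $\phi_n=0$. Where you genuinely diverge is in how the two formulas are then established. For the symmetry $\phi_{n-i}=1/\phi_i$, the paper runs a short induction directly on the recursion, $\phi_{i+1}=\phi_1-1/\phi_i=\phi_1-\phi_{n-i}=1/\phi_{n-i-1}$, never needing a closed form; you instead solve the recursion by the Chebyshev-type ansatz $\phi_i=\sin((i+1)\theta)/\sin(i\theta)$, $\phi_1=2\cos\theta$, and read the symmetry off the reflection about $(n+1)\theta=k\pi$. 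Your route buys more: closure $\phi_n=0$ forces $\theta=k\pi/(n+1)$, so you actually classify all $\phi_1$ reachable by the recursion and \emph{derive} (\ref{phiphys}) for the physical branch $k=1$, whereas the paper only asserts that ``one can check'' (\ref{phiphys}) has the required property and discusses the other branches $\phi_1=2\cos(j\pi/(n+1))$ informally after the proof. For $\tau_i$, the paper is explicit that the formula was found empirically by tabulating small $n$ (Figure~\ref{table2}) and then ``recognised'', with the verification left as easy; you supply that verification, and your reduction of the inductive step to the identity $(i+1)_q^2-(i+2)_q(i)_q=1$ (numerator telescoping to $(q-q^{-1})^2$) is correct and is the same identity that gives $\phi_i-\phi_{i+1}=1/((i)_q(i+1)_q)$ and hence $\tau'_i=-\tau_{i+1}$. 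So your write-up is, if anything, more self-contained than the paper's. One small tightening: your final remark justifies non-vanishing denominators by $(i)_q>0$, but the denominator in the $\tau$-recursion is $\phi_i+\tau_i=\bigl((i+1)_q+s(-1)^{i-1}\bigr)/(i)_q$, so when $s(-1)^{i-1}=-1$ you need $(i+1)_q>1$; this does hold on the range where the recursion is used ($i\le n-2$), since $(j)_q=\sin(j\pi/(n+1))/\sin(\pi/(n+1))>1$ for $2\le j\le n-1$, with $(n)_q=1$ only reached at the last node where no further step is taken.
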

\proof We solve the iterative system as before but there is no $f_n$ etc for our truncated graph, so we need
\[  f_{n-1}= f'_{n-1}\tau_{n-1}\tau'_{n-1} \]
in order that the relevant equation in the last group of 6 holds without that square bracketed term. This is
$\tau_{n-1}\tau'_{n-1}=\phi_{n-1}$, which comparing with the general formula $\tau_{n-1}\tau'_{n-1}$ needs  $\phi_{n}=0$. As the preceding $\phi_i$ should all be nonzero, this is the first time this should happen. Next, it follows from the inductive formula for $\phi_i$ in Proposition~\ref{propN} that $\phi_1={1\over \phi_{n-1}}$.  Moreover, assuming $ \phi_i = 1/ \phi_{n-i}$ as induction hypothesis and using the recursive relation for $\phi_{i+1}$ and  $\phi_{n-i}$ gives
\begin{displaymath}
    \phi_{i+1} = \phi_1 - {1\over \phi_i} = \phi_1 - \phi_{n-i}= {1\over \phi_{n-i-1}}
\end{displaymath}
as required, proving the stated assertion.  For $\eps=1$ as here, one can check that (\ref{phiphys}) obeys this has has the required positivity  provided we start with $\phi_1=2\cos({\pi\over n+1})$. 

To find $\tau_i$ for this choice of $\phi_1$, we iterated the recursion relation in Proposition~\ref{propN} to fill out a table of $\tau_i$ values for small $n$ values, see Table~\ref{table2}. Note that standard recursion methods e.g. on Mathematica do not yield a general answer in closed form.  We then `recognised' the general formula as stated. Once found, it is easy enough to check that $\tau_i$  obeys the recursion relation in Proposition~\ref{propN} for $\phi_i=(i+1)_q/(i)_q$ and compute the other values.
 \endproof
 \begin{table}
\begin{tabular}{|c|c|c|c|c|c|c|c|c|}\hline $n$ & $\tau_1$ & $\tau_2$ & $\tau_3$ & $\tau_4$  &  $\tau_5$ & $\tau_6$ &  $\tau_7$&$\tau_8$ \\
\hline\hline
2\  & 1 & -1 \\ \hline
3\  & $1$ &-${1\over\sqrt{2}}$   &1  \\ \hline
4\ & 1& ${1\over 2}(1-\sqrt{5})$ & $-\tau_2$ & -1 \\ \hline
5\  & 1& $-{1\over\sqrt{3}}$ & ${1\over 2}$ &$\tau_2$ & 1   \\ \hline
6\ & 1& $-{1\over 2 \cos({\pi\over 7})}$  &  $(-1)^{3/7}-(-1)^{4/7}$ & $- \tau_3$ & $-\tau_2$& $-1$\\ \hline
7\  & 1&  $-\sqrt{1 - {1\over \sqrt{2}}}$ &  $\sqrt{2}-1$ & $ -{1\over \sqrt{2}}\sqrt{1 - {1\over \sqrt{2}}}$& $\tau_3$&$\tau_2$ &$ 1$ \\ \hline
8\ & 1& $-{1\over 2 \cos \left(\frac{\pi }{9}\right)}$  & $\frac{1}{1+2 \cos \left(\frac{2 \pi }{9}\right)}$ & $-\frac{1}{4\cos \left(\frac{\pi }{9}\right) \cos \left(\frac{2 \pi }{9}\right)}$ &$-\tau_4$ &$-\tau_3$&$-\tau_2$ &$-1$  \\ \hline
\hline
\end{tabular}
\caption{Table of connection coefficients $\tau_i$ for $A_n$ for $n\le 8$,  $s=1$ and the canonical $\phi_1$. We adjoined $\tau_n$ according to the symmetry. \label{table2}}
\end{table}

The canonical choice of $\phi_1$ is illustrated in Figure~\ref{phiphases}. It is also worth noting that for $s=\pm 1$, the $\tau_i$ in this case enjoy symmetries similar to those of $\phi_i$, namely for odd $n$:
  \[ \tau_1=\pm1,\quad\tau_{n-1}=\tau_2,\quad \tau_{n-2}=\tau_3,\quad \tau_{n-1\over 2}=\tau_{n+3\over 2},\quad \tau_{n+1\over 2}=(-1)^{n-1\over 2} \sin({\pi\over n+1})\]
and for even $n$:
 \[ \tau_1=\pm1,\quad\tau_{n-1}=-\tau_2,\quad \tau_{n-2}=-\tau_3,\quad \tau_{{n\over 2}+1}=-\tau_{n\over 2}.\]
Similarly iterating for small $n$ but general $\tau_1=s$ and computing the associated $\tau_i,\tau'_i,\sigma_i,\sigma'_i$  recovers for $A_2,\cdots,A_5$ the explicit values reported in Section~\ref{secsmall} for $\eps=1$.

More generally, without requiring positivity, one can start with $\phi_1=2\cos({j\pi\over n+1})$ for $j=1,2,\cdots,n-1$, but some of these differ only by a sign so the solution they generate can be absorbed in $\eps$, and others can be excluded as some of the $\phi_i$ they generate vanish. Also, for specific $n$ there can be further `irregular' solutions not generated by our method. For example, if we  set $n=8$ then all the solutions for $\phi_1$ such that $\phi_8=0$ are given by
\[ \phi_1:\quad \pm 2\cos({\pi\over 9}),\  \pm 2\cos({2\pi\over 9}),\ \pm2\cos({3\pi\over 9}),\  \pm 2\cos({4\pi\over 9})\]
of which the 3rd solution is just $\pm1$ and can be excluded as not all the $\phi_2,\cdots,\phi_7$ are nonzero. The other three feature in  Table~\ref{table1} (we only listed one of the signs since the other can be absorbed in the choice of $\eps$). This explains the `regular' part of the table but we also see from row 8(2) that for specific $n$, we do not generate all the solutions by our method. Indeed, row 8(2) has the same initial $\phi_1$ as row 8(3) while our method only gives one set of $\phi_i$ for an initial $\phi_1$.

\section{Laplacian and elements of QFT on  $A_3$}\label{seclap}

In the remainder of the paper, we work with only the canonical QRGs on $\N$ and $A_n$ in Proposition~\ref{canN} and Corollary~\ref{canAn},  with $h_i$ real, $s=\pm1$ and $\eps=1$ there. Given this, we will now repurpose $\eps>0$ as a lattice spacing for the continuum limit of the geometry on $\N$. 
In this section, we compute and study the Laplacian $\square\psi=(\ ,\ )\nabla\extd \psi$ for a field $\psi$ on $A_n$ and $\N$, and in Section~\ref{secqg} the Ricci curvature. In both cases, we need the inverse metric, which now looks like 
\[ (a_i,a'_i)={\delta_i\over h_i},\quad (a'_i,a_i)={\delta_{i+1}\over h_i\phi_i}\]
for $i=1,\cdots,n-1$ and is  otherwise zero on basis elements. 

\begin{lemma}\label{lemlap}
    The Laplacian operator $\square f$ of a function $f=\sum_i f(i)\delta_i$ on the vertices of the $A_n$ graph  with $n\ge 3$ has the form
\begin{align*}
    \square f &=  \sum_{i = 1}^{n-1}(f(i+1) - f(i))(,)(\nabla a_i - \nabla a_i')  \\
    &= \left(  (f(1) - f(2)){\tau_1+1\over h_{1}}\right)\delta_1  \\
    &+\sum_{i = 2}^{n-1} \left( (f(i) - f(i-1))(\tau_{i-1}'+1) + (f(i)-f(i+1))(\tau_i+1)   \right)\left({1\over h_{i}} + {1\over h_{i-1}\phi_{i-1}}\right)\delta_i \\
    &+(f(n) - f(n-1))\left({\tau_{n-1}'+1\over h_{n-1}\phi_{n-1}}\right)\delta_n
\end{align*}

\end{lemma}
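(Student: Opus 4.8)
The plan is to unwind $\square=(\ ,\ )\nabla\extd$ in three stages, exploiting that $f=\sum_i f(i)\delta_i$ has a fully explicit exterior derivative. First I would read off $\extd f=\sum_{i=1}^{n-1}(f(i+1)-f(i))(a_i-a_i')$ directly from $\extd f=\sum_{x\to y}(f(y)-f(x))\omega_{x\to y}$ with $a_i=\omega_{i\to i+1}$, $a_i'=\omega_{i+1\to i}$. Since these coefficients are scalars (their exterior derivatives vanish), the left-Leibniz rule (\ref{connleib}) collapses to $\C$-linearity and gives $\nabla\extd f=\sum_i (f(i+1)-f(i))(\nabla a_i-\nabla a_i')$ with no correction terms, which already establishes the first displayed line. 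Applying $(\ ,\ )$ then reduces the whole computation to evaluating $(\ ,\ )(\nabla a_i-\nabla a_i')\in A$ for each $i$.

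The second stage is to pin down the inverse metric pairing on basis tensors. I would note that $(\ ,\ )$ is a bimodule map that vanishes on every composable tensor except the reverse pairs, and solve the inversion condition (\ref{metricinv}) against $g=\sum_i h_i(\phi_i a_i\tens a_i'+\eps a_i'\tens a_i)$ to obtain $(a_i,a_i')\propto\delta_i$ and $(a_i',a_i)\propto\delta_{i+1}$, each carrying the reciprocal of the appropriate edge-weight. The key simplification is that among the five terms (three at a boundary) making up each of $\nabla a_i$ and $\nabla a_i'$ in the established general form (\ref{eq:8-preservings})--(\ref{eq:8-preservingn}), only the reverse-pair terms $a_i'\tens a_i$, $a_i\tens a_i'$, $a_{i-1}'\tens a_{i-1}$ and $a_{i+1}\tens a_{i+1}'$ survive; the terms of type $a_{i-1}\tens a_i$, $a_i\tens a_{i+1}$, $a_{i+1}'\tens a_i'$, $a_i'\tens a_{i-1}'$ all die because they are not reverse pairs. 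Collecting the survivors I would package the result as $(\ ,\ )(\nabla a_i-\nabla a_i')=A_i\delta_i+B_i\delta_{i+1}$, where $A_i$ is proportional to $-(\tau_i+1)$ and $B_i$ to $(\tau_i'+1)$, each multiplied by a sum of two reciprocal weights.

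The third stage is bookkeeping: substituting into $\square f=\sum_i (f(i+1)-f(i))(A_i\delta_i+B_i\delta_{i+1})$ and regrouping by vertex. For interior $j$ the coefficient of $\delta_j$ receives exactly two contributions, $A_j$ (from the $i=j$ term) and $B_{j-1}$ (from the $i=j-1$ term); the observation that makes the answer collapse to the stated form is that $A_j$ and $B_{j-1}$ carry the \emph{same} metric prefactor, so that factor comes out and leaves the bracket $(f(j)-f(j-1))(\tau_{j-1}'+1)+(f(j)-f(j+1))(\tau_j+1)$. At $j=1$ and $j=n$ only one of the two contributions is present, because $\nabla a_1$ lacks the $a_0'\tens a_0$ term and $\nabla a_{n-1}'$ lacks the $a_n\tens a_n'$ term, and this produces the two boundary terms with their single $\delta$-coefficients.

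The main obstacle I anticipate is organisational rather than conceptual: attaching each reciprocal weight to the correct vertex $\delta_j$ and the correct edge, and then checking that the node-$j$ contributions coming from $\nabla a_j$ and from $\nabla a_{j-1}$ genuinely share a common metric prefactor so that the stated factorisation is exact. Keeping the boundary cases ($i=1$, $i=n-1$ and the resulting endpoints $\delta_1$, $\delta_n$) consistent with the interior formula is where index- and sign-errors are most likely, so I would cross-check the $n=3$ output against the explicit $A_3$ connection coefficients of Section~\ref{secsmall}.
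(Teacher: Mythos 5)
Your proposal is, in outline, exactly the paper's own proof: the paper also obtains $\extd f=\sum_i(f(i+1)-f(i))(a_i-a_i')$ (via the inner form $\extd=[\theta,\cdot\}$ rather than the graph formula, which gives the same thing), pulls the scalar coefficients through $\nabla$ to get the first displayed line, writes out the general connection on $a_1,a_i,a_{n-1}$ and their primed partners, and then ``applies the inverse metric'' --- which kills precisely the non-reverse-pair terms you list --- before regrouping by vertex. Your packaging $(\ ,\ )(\nabla a_i-\nabla a_i')=A_i\delta_i+B_i\delta_{i+1}$ with $A_i\propto-(\tau_i+1)$, $B_i\propto(\tau_i'+1)$, the common-prefactor observation, and the boundary bookkeeping are all correct and are what the paper's one-sentence conclusion compresses.

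There is, however, one substantive point hiding behind your phrase ``the reciprocal of the appropriate edge-weight'', and it is the only place the argument can actually go wrong. Solving (\ref{metricinv}) literally against the metric (\ref{ghphi}) forces $(a_i,a_i')=\delta_i/(\eps h_i)$ and $(a_i',a_i)=\delta_{i+1}/(h_i\phi_i)$, i.e.\ each pairing carries the reciprocal of the coefficient of the \emph{reversed} tensor factor in $g$. Carried through your third stage, this yields $(f(1)-f(2))(\tau_1+1)/(\eps h_1)$ at node $1$, the common prefactor $1/(\eps h_i)+1/(h_{i-1}\phi_{i-1})$ at interior node $i$, and $(f(n)-f(n-1))(\tau_{n-1}'+1)/(h_{n-1}\phi_{n-1})$ at node $n$ --- that is, the Lemma's formula but with the $\phi$'s attached to the opposite slots. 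The Lemma as stated (and its downstream uses, e.g.\ the $\N$ Laplacian with the factor $1/(h_i(1+\tfrac1i))$ and the $A_3$ matrix $B$ in Section~\ref{seclapAn}) instead corresponds to the pairings $(a_i,a_i')=\delta_i/(h_i\phi_i)$, $(a_i',a_i)=\delta_{i+1}/(\eps h_i)$, i.e.\ reciprocals of the same-direction weights. So the cross-check you propose against Section~\ref{secsmall} is not optional hygiene: it is exactly where you would hit this mismatch, which is an internal convention inconsistency of the paper (between (\ref{ghphi}), (\ref{metricinv}) and the Lemma) rather than a flaw in your strategy; to reproduce the Lemma verbatim you must attach the reciprocal weights the second way.
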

\proof
Because the calculus is inner, we have  $\extd f  = [\theta,f] = \sum_1^{n-1} (f(i+1) - f(i))(a_i - a_i')$ where $\theta = \sum_1^{n-1}a_i + a_i'$. Using this in the definition of the Laplacian $\square f = (,)\nabla \extd f$ for an arbitrary function $f$ we gives the first expression for the Laplacian.

Next, the general form for the connection corresponding to $a_1$, $a_{n-1}$ and $a_i$ (valid only for $1<i<n-1$) using again that the calculus is inner, is
\begin{align*}
    \nabla a_1  &=  a_1'\tens a_1 - \tau_1 a_1\tens a_1' - \sigma_1 a_1\tens a_2,\\
    \nabla a_1' &=  a_1\tens a_1' + a_2'\tens a_1' - \tau_1' a_1'\tens a_1 -  (\tau_1'+1) a_2\tens a_2',\\
    \nabla a_{n-1}  &= a_{n-2}\tens a_{n-1} + a_{n-1}'\tens a_{n-1} - \tau_{n-1} a_{n-1} \tens a_{n-1}' - (\tau_{n-1}+1) a_{n-2}'\tens a_{n-2},\\
    \nabla a_{n-1}' &=  a_{n-1}\tens a_{n-1}' - \sigma_{n-1}'a_{n-1}' \tens a_{n-2}' - \tau_{n-1}' a_{n-1}'\tens a_{n-1},\\
   \nabla a_i &= a_i'\tens a_i + a_{i-1}\tens a_{i} - \tau_i a_i \tens a_i' - (\tau_i + 1) a_{i-1}'\tens a_{i-1} - \sigma_i a_i \tens a_{i+1}, \\
    \nabla a_i' &= a_i\tens a_i' + a_{i+1}'\tens a_{i}' - \tau_i' a_i' \tens a_i - (\tau_i' + 1) a_{i+1}\tens a_{i+1}' - \sigma_i' a_i' \tens a_{i-1}'.
\end{align*}
Arranging the terms and applying the inverse metric, we recover the explicit form stated.
\endproof
We also use this for the case of $\N$ but without the final values. 

\subsection{$\N$ and its continuum limit}
 The Laplacian for the connection of the Proposition~\ref{canN} with general values of the metric $h_i$ comes out as
 \begin{align*}(\square f)(1)  &=  (f(1) - f(2)){1+s\over h_{1} }\\
 (\square f)(i)&= \left( -(\Delta_\Z f)(i)+ {(-1)^i s\over i}(f(i+1)-f(i-1))\right)\left({1\over h_{i}} + {1\over h_{i-1} (1+{1\over {i-1}})}\right);\quad i>1\end{align*}
 in terms of the usual discrete Laplacian $(\Delta_\Z f)(i)=f(i+1)+f(i-1)-2f(i)$. For the sake of discussion, we now take $s=1$ in order to avoid $(\square f)(1)=0$  for all $f$. An alternative, which amounts to ignoring the $(-1)^i$ term, would be to average the Laplacian between $s=\pm1$.  For reference,
\[(\square_h f)(i)= -(f(i+1)+f(i-1)-2f(i))({1\over h_{i-1}}+{1\over h_i})  \]
was the Laplacian for an infinite line graph $\Z$ with metric weights $h_i$ as found in \cite{ArgMa1}.   Compared to this, we see two effects of the truncation to $\N$, both going as $1/i$ so that they are not visible far from the boundary at $i=1$:
\begin{itemize}
\item[(1)] The metric-dependent factor ${1\over h_{i-1}}+{1\over h_i}$ for $\Z$ {de}creases slightly as $i\to 1$;
\item[(2)] There is a derivative correction but with an alternating sign $(-1)^i$.
\end{itemize}

We now look at both of these in the context of a lattice approximation of $(0,\infty)\subset\R$ sampled at $x=\eps i$, where $i\in \N$ and  $\eps>0$ now denotes the lattice spacing. We consider a function $f$ as either $f(x)$ or $f(i)$ via this correspondence. We implement the lattice spacing by a constant value $h_i=\eps^2$ in the inbound (increasing $i$ direction), but one also has similar results for the more symmetrical $h_i=\eps^2\sqrt{i\over i+1}$. Then the metric-dependent factor is
\begin{equation}\label{betaconsth}  {1\over h_{i}} + {1\over h_{i-1} (1+{1\over i-1})}={1\over \eps^2}\beta^{-1},\quad \beta^{-1}(i)=1+{1\over 1+{1\over i-1}}=2-{1\over i}\end{equation}

(1) We first ignore the term with $(-1)^i$ as this clearly has no continuum limit and we will argue that its effects are minimal. In this case, we have as $\eps\to 0$,
\[\beta(x)={1\over 2}+{\eps \over 4 x}+O(\eps^2),\quad {1\over\eps^2}\Delta_\Z f= {\extd^2f\over\extd x^2}+O(\eps^2)\]
To interpret what happens at order $\eps$ we consider solving the time-independent Schroedinger equation $\square f=4m E f$ for a mass $m$ and energy $E$ in our normalisation of $\square$. We set $\hbar=1$ for present purposes.  Then to $O(\eps^2)$, the equation we are solving is
\[  \left( -{1\over 2 m}{\extd^2\over\extd x^2}-{E\eps\over 2 x}\right)f= Ef.\]
This does not have an immediate parallel with quantum mechanics as the `potential' term is energy-dependent but we can see that energy $E$ shifts by an amount which is $E\eps$ times a `potential' $-{1\over 2 x}$,  with eigenfunctions to $O(\eps^2)$ obtained by solving this.  One could therefore think of this as like a $1\over x^2$ force driving solutions towards the boundary as $x=0$. We illustrate this in
Figure~\ref{figlap}. This shows the solution to
\[ \square f=4  mE f,\quad f(1)=1-\alpha,\ \quad f(2)=1-2\alpha,\quad \alpha={2 m E \eps^2\over 1+2mE\eps^2}\]
where the initial conditions are such that the linearly extrapolated value at the origin is $f(0)=1$ and $4mE f(1)=(\square f)(1)=2(f(1)-f(2))/\eps^2$ as required. The effect of the $(-1)^i$ is to produce ripples in the solution which are less pronounced at larger $x$ and which get faster and smaller as $\eps\to 0$ (since there are more steps  in the range $(0,x)$ for any finite $x$).

\begin{figure}
\[\includegraphics[scale=0.6]{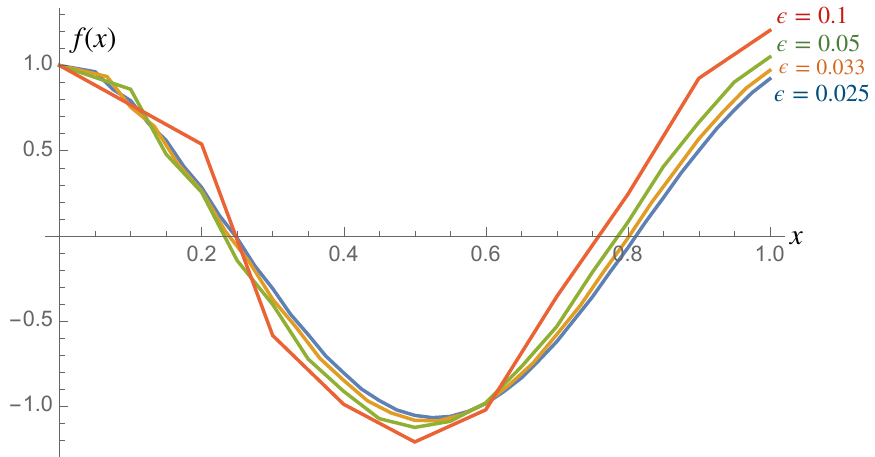}\]
\caption{\label{figlap} Numerical solutions of  $\square f=4mE f$ at $mE=15$ and different values of $h_i=\eps^2$, converging to a smooth solution as $\eps\to 0$.}
\end{figure}

(2) For a theoretical picture of the term with the $(-1)^i$ factor, we discuss two ways to think about this, at least intuitively.  One is to live with the lack of continuity and just keep the $(-1)^i$ factor which in the limit of $\eps\to 0$ stands for an infinitely-rapidly alternating function of $x=i\eps$, but which makes sense for any finite $\eps>0$. In this case, the other parts of the expression have a limit and we obtain
\[\square f= - 2{\extd^2f\over\extd x^2}+ (-1)^i {4\over x}{\extd f\over\extd x}+O(\eps)\]
in so far as this makes sense. The other approach is to sample our functions only at even $i$ and replace $(-1)^i{1\over i}$ by its average value at $i$ and $i+1$, i.e. by ${1\over 2}({1\over i}-{1\over i+1})={1\over 2 i(i+1)}$, which tends to $\eps^2\over 2 x^2$ plus higher order in $\eps$. In this case, one can say, again intuitively, that
\[ \square f=(- {\extd^2f\over\extd x^2}+ {\eps \over x^2}{\extd f\over\extd x})\beta^{-1}(x)+O(\eps^2)=-({\extd^2f\over\extd x^2}+4\beta'{\extd f\over\extd x})\beta^{-1}+O(\eps^2)\]
where we recognise $\beta'(x)=-{\eps\over 4 x^2}$. This leads to a further term ${\eps\over 2 m x^2}f'$ added to the effective `Hamiltonian' in our previous analysis. This contribution no longer has the flavour of  a potential energy but rather of a coupling to an effective background gauge potential. Note that the expression here is not quite  $\beta^{-1}({\extd^2\over\extd x^2} - {\beta'\over 2\beta}{\extd\over\extd x})$, the  classical Laplacian for metric $g=\beta\extd x\tens\extd x$ and connection $\nabla\extd x=-{\beta'\over 2\beta}\extd x\tens\extd x$ in our notations. (Namely, it differs by a factor $-4$ in the $\beta'$ coefficient given that $\beta\approx 1/2$.) We also recall that the limit of the quantum geometry is more precisely a 2-dimensional noncommutative differential calculus rather than a classical calculus.

The overall picture is that the direction-dependent quantum metric on $\N$ cannot be avoided as we approach the $i=1$ boundary and cannot be mapped to an effective continuum limit, but we can begin to get a feel for its physical significance as something like an effective force pushing solutions towards the boundary and possibly a further velocity dependent force. This analysis was for constant metrics $h_i$ or similar. We will mention another natural family of metrics in Section~\ref{secqg}.

\subsection{QFT on a finite lattice interval $A_n$}\label{seclapAn} The Laplacian from Lemma~\ref{lemlap} for the $A_n$ geometry given for the Corollary \ref{canAn}  is
\begin{align*}
    \square f(1) &= (f(1) - f(2))\frac{s+1}{ h_1}, \quad
    \square f(n) = (f(n) - f(n-1)) \frac{1+(-1)^{n} s }{ h_{n-1} } (2)_q,\\
    \square f(i) &= \left( -\Delta_\Z f(i) +  {(-1)^is \over (i)_q}(f(i+1) - f(i-1))  \right)\left(1 + \frac{h_{i}(i-1)_q}{h_{i-1} (i)_q}\right){1\over h_{i}}
\end{align*}
for $i=2,\cdots, n-1$. We used that $(n)_q=1$ and $(n-1)_q=(2)_q$.  It is convenient to  write the Laplacian in the form
\[ \square f=(Lf)\beta^{-1};\quad \beta^{-1}(1)={1\over h_1},\quad\beta^{-1}(n)={(2)_q\over h_{n-1}},\quad \beta^{-1}(i)=\left(1 + \frac{h_{i}(i-1)_q}{h_{i-1}(i)_q}\right){1\over h_{i}},\]
for $i=2,\cdots,n-1$. Then for the free field QFT partition functions etc., we are interested in
\[Z=\int\prod_{i=1}^n\extd \psi(i)\extd\bar\psi(i) e^{{\imath\over \alpha}\sum_{i=1}^n {\mu_i\bar\psi(i)((L\psi)(i)\beta^{-1}(i)}-m^2 \psi(i))}\]
with $\alpha$ a real coupling constant and $\mu_i>0$ a measure of `integration' (now a sum) on $A_n$. This action is quadratic and hence can be evaluated as a determinant. This also applies in the real scalar field case where we have $\prod_i\extd\psi(i)$. 

As an example, we let $n=3$ and $s=1$. We have $(2)_q=\sqrt{2}$ and $(3)_q=1$ and weights $\mu_i$, we have an action for a complex (or real) 3-vector $\psi=(\psi_1,\psi_2,\psi_3)$. In these terms, the action is quadratic,
\begin{align*} S[\psi]&=\bar\psi_1 \mu _1 \left(\frac{2 \left(\psi _1-\psi _2\right)}{h_1}-m^2 \psi _1\right)+\bar\psi_3 \mu _3 \left(-m^2 \psi _3\right)\nonumber\\
&\quad  +\bar\psi_2 \mu _2 \left(\left(\frac{1}{h_2}+\frac{1}{\sqrt{2}h_1 }\right) \left(\left(-\frac{1}{\sqrt{2}}-1\right) \psi _1+\left(\frac{1}{\sqrt{2}}-1\right) \psi _3+2 \psi _2\right)-m^2 \psi _2\right)\\
&= \bar\psi.B.\psi\end{align*}
for a matrix
\[B=\begin{pmatrix}
 \mu _1 \left(\frac{2}{h_1}-m^2\right) & -\frac{ \mu _12}{h_1} & 0 \\
- \mu_2\left(1+\frac{1}{\sqrt{2}}\right) \left(\frac{1}{h_2}+\frac{1}{\sqrt{2}h_1}\right)\quad & \mu_2 \left({2}\left(\frac{1}{h_2}+\frac{1}{\sqrt{2}h_1}\right)- m^2\right)\quad & \mu _2 \left(\frac{1}{\sqrt{2}}-1\right) \left(\frac{1}{h_2}+\frac{1}{\sqrt{2}h_1}\right) \\
 0 & {0} & \mu _3 \left(-m^2\right) \end{pmatrix}\]
 now including the mass term. Hence, the partition function $Z$ is again given as usual for a Gaussian via the determinant
\begin{align*}\det(B)&=  \frac{\mu _1 \mu _2 \mu _3 m^2}{h_1^2 h_2^2} \Big(h_1^2  h_2 m^2\left(-h_2 m^2+2  \right)+\left(\sqrt{2}+2\right) h_1 h_2^2 m^2 \\
 &\qquad\qquad\qquad\quad + \left(\sqrt{2}-2\right) h_1h_2  -h_2^2\left(\sqrt{2}-1\right) \Big). \end{align*}
 For the constant case $h_1=h_2$, this simplifies to
 \[ \det(B)=-\frac{\mu _1 \mu _2 \mu _3m^2}{h_1^2}   \left( h_1 m^2 \left(h_1 m^2-\sqrt{2}-4\right)+1\right).\]
 One can similarly compute correlation functions.

\section{Curvatures and elements of quantum gravity on $n=3$}\label{secqg}

In this section, we compute the curvatures in terms of the $h_i$ real parameters and $s=\pm 1$ and $\eps=1$, i.e. for the canonical QRGs on $\N$ in Proposition~\ref{canN} and on $A_n$  by Corollary~\ref{canAn} with $\phi_1=2\cos({\pi\over n+1})$.  We then use the lifting map (\ref{imin}) to define the Ricci tensor as in \cite{BegMa} and the Ricci scalar by $S=(\ ,\ )({\rm Ricci})$ where $(\ ,\ )$ is the inverse metric. Curvature tensors will depend on the $h_i$ only through the ratio
\[ \rho_i={h_{i+1}\over h_i},\]
which we use throughout the section. We also repurpose $\eps$ as the lattice spacing in the case of $\N$.

In general, the Riemann curvature (\ref{curv}) for our form of connection reduces to
\begin{align*}
    \text{R}_\nabla a_1 &= 0,\\
    \text{R}_\nabla a_1' &= (\tau_1'(\sigma_{2}'-\tau_1) +\sigma_{2}' )a_1'\wedge a_1\tens a_1' + (  \tau_1'(\tau_2' - \sigma_1) + \tau_2' )a_1'\wedge a_1\tens a_{2},\\
    \text{R}_\nabla a_i &= (\tau_i(\sigma_{i-1}-\tau_i') + \sigma_{i-1} - \sigma_i(\tau_{i+1}+1))a_i\wedge a_i'\tens a_i + ( \tau_i(\tau_{i-1}-\sigma_i') \\
    &\quad + \tau_{i-1} ) a_i\wedge a_i'\tens a_{i-1}',\\
    \text{R}_\nabla a_i' &= (\tau_i'(\sigma_{i+1}'-\tau_i) + \sigma_{i+1}' - \sigma_i'(\tau_{i-1}'+1) )a_i'\wedge a_i\tens a_i' + (  \tau_i'(\tau_{i+1}' - \sigma_i) \\
    &\quad + \tau_{i+1}' )a_i'\wedge a_i\tens a_{i+1},\\
    \text{R}_\nabla a_{n-1} &= ( \tau_{n-1}(\sigma_{n-2}-\tau_{n-1}') + \sigma_{n-2} )a_{n-1}\wedge a_{n-1}'\tens a_{n-1}\\
    &\quad + (\tau_{n-1}(\tau_{n-2} - \sigma_{n-1}') + \tau_{n-2})a_{n-1}\wedge a_{n-1}'\tens a_{n-2}',\\
    \text{R}_\nabla a_{n-1}' &= 0
\end{align*}
for  $i= 2,\cdots, n-2$ on $A_n$ and the same without the final cases on $\N$.

\subsection{Curvatures for $\N$}  The results for the canonical solution in Proposition~\ref{canN}  with $s=\pm1$ are as follows. For the Riemann curvature, we find
\begin{align*}
    \text{R}_\nabla a_1 &= 0, \\
    \text{R}_\nabla a_1' &= \left( \rho_1\frac{ (1-2s) }{4}-\frac{(1+2s)}{6} \right)a_1'\wedge a_1 \tens a_2 - \left( \frac{ (s+2)}{\rho_1 (s-2)}+\frac{1}{2} \right)a_1'\wedge a_1 \tens a_1' , \\
    \text{R}_\nabla a_i &= \left( \rho_{i-1}\frac{ \left(i-(-1)^i s\right)^2}{i^2 }{-}\frac{\rho_i\left(i+1+(-1)^i s\right)^2}{(i+1)^2}-\frac{1}{i(i+1)} \right)a_i\wedge a_i' \tens a_i\\
    &\quad + (-1)^i s\left( \frac{ \left(i-(-1)^i s\right)}{(i-1)i}+\frac{1}{ \rho_{i-1} \left(i-(-1)^i s\right)} \right)a_i\wedge a_i' \tens a_{i-1}' , \\
    \text{R}_\nabla a_i' &= \left( -\frac{ \left(i+(-1)^i s\right)}{ \rho_{i-1}\left(i-(-1)^i s\right)}+\frac{\left(i+1-(-1)^i s\right)}{\rho_i \left(i+1+(-1)^i s\right)}-\frac{1}{i(i+1)}  \right)a_i'\wedge a_i \tens a_i' \\
    &\quad + {(-1)^i s\over (i+1)}\left( \frac{   \left(i+1-(-1)^i s\right)}{(i+2 ) }+\rho_{i}\frac{  \left(i+1+(-1)^i s\right)}{(i+1 )} \right)a_i'\wedge a_i \tens a_{i+1} 
\end{align*}
for $i\ge 2$. The Ricci tensor for the lift (\ref{imin}) is then
\begin{align*}
    \text{Ricci} &= \left( \rho_1\frac{ (s-2) s}{4}-\frac{s (s+2)}{6} \right)  a_1\tens a_2 -\left( \frac{ (s+2)}{\rho_1 (s-2)}+\frac{1}{2} \right)  a_1\tens a_1' \\
    &+{{1\over 2}}\sum_{i\ge 2}\Big\{\phi_i\left( -\frac{ \left(i+(-1)^i s\right)}{\rho_{i-1} \left(i-(-1)^i s\right)}+\frac{ \left(i+1-(-1)^i s\right)}{\rho_i \left(i+1+(-1)^i s\right)}-\frac{1}{i(i+1)} \right)  a_i\tens a_i' \\
       &\qquad+{(-1)^i s\phi_i\over (i+1)}\left( \frac{\left(i+1-(-1)^i s\right)}{ (i+2)}+\rho_{i}\frac{ \left(i+1+(-1)^i s\right)}{(i+1)  } \right)  a_i\tens a_{i+1}  \\
        &\qquad+{1\over\phi_i}\left( \rho_{i-1}\frac{ \left(i-(-1)^i s\right)^2}{i^2 }{-}\frac{\rho_i\left(i+1+(-1)^i s\right)^2}{(i+1)^2}-\frac{1}{i(i+1)} \right) a_i'\tens a_i \\
    &\qquad+{(-1)^i s\over\phi_i}\left( \frac{  \left(i-(-1)^i s\right)}{(i-1) i }+\frac{1}{ \rho_{i-1} \left(i-(-1)^i s\right)} \right)  a_i'\tens a_{i-1}' \Big\}.
\end{align*}
There are no Ricci flat solutions but note that the only coefficients that do not decay $O({1\over i})$ for generic $
\rho_i$  are the coefficients of $a_i\tens a'_i$ {and $a_i'\tens a_i$, which asymptote to ${1\over\rho_i}-{1\over\rho_{i-1}}$ and $\rho_{i-1}-\rho_i$ respectively}. 

Contracting with $(\ ,\ )$, the Ricci scalar is then
\begin{align*}
    \text{S}(1) &= {-} \frac{1}{2 h_1} \left(1+\frac{2 (s+2)}{\rho_1 (s-2)}\right), \\
    \text{S}(2) &= {-} \frac{1}{8 h_2} \left(1-\frac{6  (s+2)}{\rho_1 (s-2)}+\frac{6 (s-3)}{\rho_2 (s+3)}\right),  \\
    \text{S}(i) &=  {-}{1\over 2h_i}\left( {1\over i^2}{+}\rho_{i-1}\frac{ (i-1) }{i^3}{-}\rho_{i-1}\rho_{i-2}\frac{\left(i-1+(-1)^i s\right)^2}{i^2 } {+}\rho^2_{i-1}{(i-1)^2 \left(i-(-1)^i s\right)^2\over i^4}    \right. \\
    &
\quad\qquad \qquad\left.  +\frac{(i+1) \left(i+(-1)^i s\right)}{\rho_{i-1}i  \left(i-(-1)^i s\right)}  -\frac{(i+1) \left(i+1-(-1)^i s\right)}{\rho_i i \left(i+1+(-1)^i s\right)} \right), 
\end{align*}
with
\[ S(i)=  {-}{1\over 2 h_i}\left(\rho_{i-1}(\rho_{i-1}-\rho_{i-2})+ {1\over \rho_{i-1}}-{1\over\rho_i}\right)+ O({1\over i})\]
for generic $h_i$. In the constant $h_i$ case, however, we have to look to the next order and then one finds
\begin{align*}S(i)&=-{(1+4(-1)^i s) \over  h_1 i^2}+ O({1\over i^3})\end{align*}
 which has a non-continuum alternating term suppressed for large $i$, in line with such a term term in the Laplacian in Section~\ref{seclap}. 

 Alternatively, we can land exactly on $S=0$, in fact on any prescribed function for the curvature, provided we use an oscillatory $h_i$ which will then not have a classical limit itself. We explore this option next.

\begin{proposition}\label{flatmetric} On $\N$, there is a unique metric $\{h_i\}$ up to normalisation such that $S=0$, given by
\[s=1:\quad h_{i}^{flat} =2 h_1 {(2\lfloor{i\over 2}\rfloor+1)^2\over (i+1)}=2 h_1\begin{cases}  i+1 &  i\ \text{even}\\ \frac{i^2}{(i+1)}  & i\ \text{odd}\end{cases},\]
\[ s=-1:\quad h_i^{flat}=2 h_1{\lceil{i\over 2}\rceil^2\over (i+1)}={ h_1\over 2}\begin{cases} { i^2\over (i+1)} &  i\ \text{even}\\ i+1  & i\ \text{odd}\end{cases},\]
for any inital value $h_1$.
\end{proposition}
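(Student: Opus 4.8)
The plan is to read the condition $S=0$, using the explicit formulae for the Ricci scalar $S(1)$, $S(2)$ and the general $S(i)$ ($i\ge 3$) displayed just above the statement, as a recursion for the metric ratios $\rho_i=h_{i+1}/h_i$, and then to show that this recursion has a unique solution which coincides with the stated closed form. Throughout I keep $\eps=1$ and treat the two admissible values $s=1$ and $s=-1$ as separate cases.

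First I would establish \emph{uniqueness}. The equation $S(1)=0$ involves only $\rho_1$, and since the coefficient of $1/\rho_1$ is $\tfrac{2(s+2)}{s-2}\ne 0$ for $s=\pm1$ it solves to a single value ($\rho_1=6$ for $s=1$, $\rho_1=2/3$ for $s=-1$). Next $S(2)=0$ involves $\rho_1,\rho_2$ and is linear in $1/\rho_2$ with coefficient $\tfrac{6(s-3)}{s+3}\ne 0$, so with $\rho_1$ already fixed it determines $\rho_2$ ($\rho_2=3/4$ for $s=1$, $\rho_2=3$ for $s=-1$). For $i\ge 3$ the quantity $\rho_i$ enters $S(i)$ only through the single term $-\tfrac{i\,(i+1-(-1)^i s)}{\rho_i(i+1)(i+1+(-1)^i s)}$; since $i+1\pm(-1)^i s\ge i\ge 1>0$ for $s=\pm1$, the coefficient of $1/\rho_i$ is nonzero, so $S(i)=0$ is a nondegenerate linear equation for $1/\rho_i$ and fixes $\rho_i$ uniquely in terms of $\rho_{i-1},\rho_{i-2}$. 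Hence the whole sequence $\{\rho_i\}$, and therefore $\{h_i\}$ up to the overall normalisation $h_1$, is determined by $S=0$.

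For \emph{existence} I would substitute the claimed closed form. From $h_i^{flat}$ one reads off the ratios in closed form; for $s=1$ one finds $\rho_{2k}=\tfrac{2k+1}{2k+2}$ and $\rho_{2k+1}=\tfrac{(2k+2)(2k+3)}{(2k+1)^2}$ (with an analogous parity split for $s=-1$), whose first two values agree with the $\rho_1,\rho_2$ found above. It then remains to insert these ratios into the general formula for $S(i)$ and check that the bracket collapses to zero. An equivalent and arguably cleaner route is to solve the second-order recursion for $\rho_i$ directly, guided by the low-$i$ values, and to recognise the telescoping product $h_i=h_1\prod_{j<i}\rho_j$ as the stated formula; either way the boundary values $S(1)=S(2)=0$ supply the two initial conditions.

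The main obstacle is precisely this verification step: the expression for $S(i)$ is long, couples $\rho_{i-2},\rho_{i-1},\rho_i$ through the alternating factor $(-1)^i s$, and the solution ratios are not a single rational function of $i$ but depend on the parity of $i$. Consequently the cancellation cannot be done in one stroke and must be organised into the four $(i\bmod 2,\,s)$ cases, in each of which the sum of six terms is checked to vanish by elementary algebra. Once arranged by parity the computation is routine, but it is the one place where care is genuinely required.
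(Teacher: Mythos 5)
Your proposal is correct and follows essentially the same route as the paper's own (very terse) proof: the paper likewise observes that $S(i)=0$ can be solved iteratively for the $\rho_i$ given $h_1$, since $\rho_i$ enters $S(i)$ only linearly through $1/\rho_i$ with nonzero coefficient, and then states that carrying out the iteration for $s=\pm1$ yields the closed form. Your write-up simply makes explicit the nondegeneracy check, the initial values $\rho_1,\rho_2$, and the parity-organised verification that the paper leaves implicit.
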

\proof From the form of $S(i)$, it is clear that we can solve iteratively to find $h_i$ for any initial $h_1$. Doing this for $s=\pm1$ gives the solutions shown. \endproof

For the rest of the section, we focus on $s=1$ but there is a similar story for $s=-1$.  First note that
 setting $s=1$ and $h_1=\eps^3$ for a small number $\eps>0$ and repeating the analysis in Section~\ref{seclap}, the metric-dependent factor in the Laplacian becomes
\[ {1\over h_{i}}+ {1\over h_{i-1}(1+{1\over i-1})}={1\over\eps^3}\beta^{-1}(i);\quad \beta^{-1}(i)={1\over i}+O({1\over i^2})\]
in place of (\ref{betaconsth}). We can take the continuum limit with the leading order $\beta^{-1}(i)={1\over i}= {\eps \over x}$ and do the parallel analysis to (1) in Section~\ref{seclap}. Ignoring the $(-1)^i$ differential term as we did before, gives that $\square f=4m E f$ becomes the Airy equation
\begin{equation}\label{airy} {\extd^2 f\over \extd x^2}+4 m E x f=0\end{equation}
with a real decaying cosine-wave-like solution if $4mE>0$ and, say, $f(0)=1, f'(0)=0$. In addition, we can expect ripples in the discrete solution visible for small $i$ due to the even values of $\beta^{-1}(i)$ and due to the $(-1)^i$ differential term as discussed in Section~\ref{seclap}. Meanwhile, the QFT action depends on the measure $\mu_i$ and if we take the obvious choice $\mu_i=h_i$ then this cancels the $1/x$ in the continuum limit and we obtain a multiple of the free field action (again ignoring the suppressed $(-1)^i$ term in the Laplacian), which is perhaps reasonable as the curvature is zero.

Next, we consider metrics near to the above flat one in a conformal sense,
\[ h_i=h_i^{flat}g_i;\quad \rho_i=\rho_i^{flat} \eta_i;\quad \eta_i={g_{i+1}\over g_i}\]
with $h^{flat}$ from Proposition~\ref{flatmetric} for $s=1$. Then a calculation with $h_1=\eps^3$ and $i=x/\eps$ as above and working to leading order in $\eps$, gives
\[  S(x)=-{1\over 2 h_i^{flat}g_i} \left(  \eta_{i-1}(\eta_{i-1}-\eta_{i-2})+ {1\over \eta_{i-1}}-{1\over\eta_i}  \right)=-{1 \over 4 \eps  x g }({1\over \eta^2}{+}\eta) {\extd \eta\over \extd x},\]
where
\[ \eta_i=1+{g_{i+1}-g_i\over g_i}=1+\eps g^{-1}{\extd g\over \extd x}+ O(\eps^2).\]
Putting this in, we have to leading oder in $\eps$,
\[ S(x)=-{ g^{-1} \over 4 x  }   {\extd \over\extd x}(g^{-1}{\extd g\over\extd x})=-{e^{-\psi} \over 4  x   } {\extd^2\psi \over\extd x ^2}\]if we write $g(x)=e^{\psi(x)}$ for a real scalar field $\psi$.

We briefly consider the Einstein-Hilbert action for such metric fluctuations near the scalar-flat metric, expressed in $\psi(x)$. We need to fix the measure $\mu_i$ in
\[ S[h]=\sum_i \mu_i S(i)\]
and based on experience in \cite{Ma:haw} for $\Z$, we take $\mu_i=h_i=h_i^{flat}g_i$. The theory behind how to choose this measure is not clearly understood, but we expect some power of the metric. Classically, one would have $\sqrt{\det(g)}$ for the measure but in \cite{Ma:haw} it gave more reasonable answers not to take a square root, related to $\Omega^1$ there being 2-dimensional. Our $\Omega^1$ is not exactly a free module but is more like this far from the boundary. In this case,
\[ S[\psi]=\sum_{i=1}^\infty h_i ^{flat}g_iS(i)\to -{\eps\over 2 }\int_0^\infty \extd x {\extd \over\extd x}({\extd\psi\over \extd x})= { \eps\over 2}({\extd \psi\over\extd x})(0^+)\]
to leading order as $\eps\to 0$, given that $h_i^{flat}=2\eps^2 x$ to leading order and assuming our fields decay at $\infty$. The $\eps$  can be absorbed in $\mu$ or in a coupling constant in front of the action. The action here is topological and appears to amount to a trivial theory on the boundary at $x=0^+$ (approaching from the bulk), but could be more interesting before we take the continuum limit and if we look more closely at the boundary.

\subsection{Curvatures for $A_n$} We proceed from the general expression for the curvature and put in the connection in Corollary~\ref{canAn}. However, the $\tau_i,\tau'_i,\sigma_i,\sigma_i'$ are exactly a $q$-deformation of the formulae for $\N$ in the sense that all integers $i-1,i,i+1,i+2$ are replaced by $(i-1)_q,(i)_q,(i+1)_q,(i+2)_q$ respectively.  With that change, the formulae for $R_\nabla$ for are exactly as before except that now $R_\nabla a_{n-1}'=0$ and $R_\nabla a_{n-1}$ drops the term with $\rho_{n-1}$.  One can also simplify with $(n)_q=1$ and $(n-1)_q=(2)_q$. Likewise $R_\nabla a_1=0$ and $R_\nabla a'_1$ drops the term with $1/\rho_0$. Thus,
\[ R_\nabla a_1=0,\quad R_\nabla a_1'= - \left(\frac{s+(2)_q}{\rho_1 (s-(2)_q)}+\frac{1}{(2)_q}\right)a_1'\wedge a_1\tens a_1'  ,\]
\[  R_\nabla a_{n-1}=\left(\frac{\rho_{n-2} \left((2)_q+(-1)^n s\right)^2}{(2)_q^2}-\frac{1}{(2)_q }\right)a_{n-1}\wedge a_{n-1}'\tens a_{n-1},\quad R_\nabla a'_{n-1}=0. \]
For the  Ricci tensor the sum over $i$ is $q$-deformed and truncated, but for the $i=1$ term we drop the $1/\rho_0$ in the coefficient of $a_1\tens a'_1$ and the $a_1'\tens $ terms altogether, and for the $i=n-1$ terms we drop the  $\rho_{n-1}$ in the coefficient of $ a_{n-1}'\tens a_{n-1}$ and the $a_{n-1}\tens$ terms altogether.  Thus,
\begin{align*} {\rm Ricci}&=  {s\over 2}\left( \rho_1{s-(2)_q\over (2)_q}-{(2)_q+s\over (3)_q}\right)a_1\tens a_2- {1\over 2}\left({(2)_q((2)_q+s)\over \rho_1(s-(2)_q)}+1\right)a_1\tens a_1' \\
&\quad +{1\over 2} \sum_{i=2}^{n-2}(q-{\rm def\ previous})+{1\over 2}\left(\rho_{n-2}{((2)_q+(-1)^ns)^2\over (2)_q}-1\right)a_{n-1}'\tens a_{n-1}\\
&\quad -{(-1)^ns\over 2}\left({(2)_q+(-1)^n s\over (3)_q}+{(2)_q\over \rho_{n-2}((2)_q+(-1)^n s)}\right) a'_{n-1}\tens a'_{n-2}
\end{align*}

There are no Ricci flat solutions. The Ricci scalar is then
\begin{align*}
    \text{S}(1) &= -\frac{1}{2 h_1}\left(1+ {(2)_q(s+(2)_q)\over \rho_1(s-(2)_q)}\right),\\
    \text{S}(2) &= -\frac{(3)_q}{2 h_2 (2)_q}\left( {1\over (2)_q(3)_q}- {s+(2)_q\over \rho_1 (s-(2)_q)}+ {s-(3)_q\over \rho_2(s+(3)_q)}\right),\\
    \text{S}(i) &= -\frac{(i+1)_q}{2h_{i} (i)_q}\left({1\over (i)_q(i+1)_q}+ { ((i)_q+(-1)^is)\over \rho_{i-1}((i)_q-(-1)^is)}- {((i+1)-(-1)^i s)\over \rho_i((i+1)_q+(-1)^i s)}   \right) \\
    &-{(i-1)_q^2\over 2 h_{i}(i)_q^2}\rho_{i-1}\left({1\over (i-1)_q(i)_q}- \rho_{i-2}{((i-1)_q+(-1)^is)^2\over (i-1)_q^2} +\rho_{i-1}{((i)_q-(-1)^i s)^2\over (i)_q^2}   \right), \\
    \text{S}(n-1)&=- \frac{(3)_q^2}{2 h_{n-2} (2)_q^2}\left({1\over (2)_q (3)_q} +\rho_{n-2}{((2)_q+(-1)^n s)^2\over (2)_q} - \rho_{n-3}{((3)_q-(-1)^n s)^2\over (3)_q}  \right),\\
    \text{S}(n) &= - \frac{1}{2h_{n-1}}\left( (2)_q-  \rho_{n-2}((2)_q+(-1)^n s)^2 \right)
\end{align*}
for $n\ge 3$. For $n=3$ we have $S(2)=S(n-1)=0$. These formulae show how the geometry of $A_n$ $q$-deforms that of $\N$. As with Proposition~\ref{flatmetric}, there is again a unique metric $h^{flat}$ up to overall scale such that $S=0$.

We conclude with a small example for $n= 3, s = 1$. Then $(2)_q=\sqrt{2}$, $(3)_q=1$ and we obtain
\[S=\left({(3+2\sqrt{2})\over\sqrt{2} h_2}-{1\over 2 h_1}\right)\{1,0,-(3-2\sqrt{2})\}\]
at the three points. This vanishes at $h_2=(4+3\sqrt{2})h_1$. Next, if we write $\mu_1=h_1, \mu_3=h_2$ then  get for the Einstein-Hilbert action
\[ S[\rho]:=\sum_i\mu_iS(i)={(3+2\sqrt{2})\over\sqrt{2}\rho}+{(3-2\sqrt{2})\over 2}\rho -{1\over \sqrt{2}}-{1\over 2};\quad \rho= {h_2\over h_1}\]
but note that we can get any coefficients for the two powers of $\rho$ by scaling $\mu_i$. Sticking with the obvious values, if we ignore the constant then
\[ Z=\int \extd h_1\extd h_2 e^{-{1\over G}({c\over\rho}+\rho)}=\int_0^\infty h_1\extd h_1\int_0^\infty\extd \rho e^{-{1\over G}({c\over\rho}+\rho)};\quad c=24+17\sqrt{2}\]
for a real positive coupling constant $G$. The first integral is an infinite volume which we ignore, while the $\rho$ integrals converge for the calculation of expectation values,
\[ \int_0^\infty\extd\rho e^{-{1\over G}({c\over\rho}+\rho)}\rho^m= 2 c^{\frac{m+1}{2}} K_{m+1}\left(\frac{2 \sqrt{c}}{G}\right)\] as BesselK functions. These diverge as $G\to 0$ and as $G\to \infty$, but the expectation  behave like
\[ \<\rho^m\>\to \begin{cases} c^{m\over 2} & G\to 0\\ \infty & G\to \infty\end{cases},\]
while, for example, the relative uncertainty increases from 0 to a limit
\[ {\Delta\rho\over \<\rho\>}:={ \sqrt{\<\rho^2\>-\<\rho\>^2}\over \<\rho\>} \to 1;\quad {\<\rho^2\>\over\<\rho\>^2}\to 2 \]
as $G\to \infty$ (the `strong gravity' limit). This looks quite reasonable for a theory of quantum gravity on 3 points in the sense that it follows the same pattern as other models\cite{Ma:sq,ArgMa1,LirMa}. 

\section{Concluding remarks}

We have found that the quantum geometry of a finite discrete lattice in 1 dimensions is intrinsically $q$-deformed, but from solving for a quantum Riemannian geometry and not from assuming a quantum group. Also note that the structure of $\Omega$  is rather nontrivial as $\Omega^1,
\Omega^2$ are not free modules, i.e. the differential structure is not parallelisable with a global basis. In fact, the exterior algebra $\Omega$ is a quotient of the preprojective algebra of Dynkin type $A_n$, which is intimately connected to the representation theory and structure of the Lie algebra $sl_{n+1}$. This suggests that there could be a  reduced quantum group $u_{q}(sl_{n+1})$ (in some conventions) at play with $q$ an even $2(n+1)$-th root of unity. This should be explored further as well as the  implied links to the physics of spin-chains as quantum integrable systems. Finally, the mathematical structure suggests that it should also be interesting to look at quantum Riemannian geometry (and quantum spin `chains', reduced quantum groups) for the other types of Dynkin graphs.

On the physics side, our emergence of $q$-deformation out of discretisation together with the belief that $q$-deformation in 2+1 quantum gravity with point sources corresponds to a cosmological constant (see \cite{MaSch} for an overview) suggests that switching on the cosmological constant could be equivalent to discretisation of quantum geometry. The models are very different, but if we naively equate $q=e^{\imath\pi\over n+1}$ with $q=e^{-\imath {\lambda_p\over\lambda_c}}$ where $\lambda_c$ is the cosmological constant length scale and $\lambda_p$ the Planck length, then we get $n\sim \pi \lambda_c/\lambda_p$. Here,  $\lambda_c=1/\sqrt{\Lambda}$ for positive cosmological constant $\Lambda$. If a similar ideas were to hold for our observed 3+1 universe, where it appears that $\lambda_c\sim 10^{26}m$, this would give $n\sim 3\times 10 ^{61}$. This is merely for comparison and we not suggesting any particular role for the QRG of the $A_n$ graph with this many nodes in 2+1 quantum gravity.  Morally speaking, however, the idea that we discretise spacetime as a way to regularise quantum gravity, seems to match up with switching on the cosmological constant and could explain why the latter is very small compared to the Planck scale and yet nonzero. This would agree with other evidence\cite{MaTao} that it could arise as a consequence of quantum gravity effects that render spacetime noncommutative. Looking more broadly at a possible role of $A_n$ in quantum gravity, we speculate that this could perhaps be relevant to the geometry of an open string of $n$ Planck lengths. 

There is also a lot more to be done on the QRG of $\N$. This we found to be rational,  i.e. if the metric coefficients are rational then so are all Christoffel symbols etc. We found that there were non-continuum $(-1)^i$ terms which have no continuum limit but which would be pushed to the origin at $x=0$ in the limit. In that case the unique (up to scale) flat metric on $\N$ led for the Laplacian to the Airy equation in the bulk, but the behaviour around $x=0$ needs much more attention. This would also affect our conclusion for the field theory of a conformal factor $e^\psi$ on this flat metric, which we found to be topological and hence reducing to the value at the origin, but again on the assumption of a particular choice $\mu_i=h_i$ of the measure for `integration' prior to taking the continuum limit. There is as of yet no general theory for this measure as well as a lack of a variational calculus in general. Also, we only examined the limit of scalars but the limit of the Ricci tensor etc. can also be studied provided we limit the differential structure correctly. For $\Z$, this is 2-dimensional and limits to a certain noncommutative 2-dimensional calculus on the line\cite{ArgMa1} with the classical calculus as a quotient. This is the reason why we have any curvature in the first place. For $\N$, the calculus is more complicated particularly around the origin, but in the bulk we would expect a similar limit. It also remains to look at particle creation and other possible quantum gravity effects\cite{MukWin}, such as found for $\Z$ in \cite{Ma:haw} but now adapted to $\N$. It could also be of interest to use both $A_n,\N$ as parts of higher-dimensional models as in \cite{ArgMa2}, in place $\Z_n$.

Finally, we took a first look at quantum theory  and quantum gravity in a functional integral approach on $A_n$ for $n=3$, with the quantum gravity expectation values and relative uncertainties in line with other models \cite{Ma:sq,ArgMa1,LirMa}. Because of the $q$-deformation, these models also have a rich structure in the numerics which, however, manages to stay real for the particular root of unity in the picture.  These are some directions for further work.

\section*{Acknowledgements} The first author was partially supported by CONACyT (M\'exico)

\section*{Author Declarations}

The authors have no conflicts to disclose. Data sharing is not applicable to this article as no new data were created nor analyzed in this study.

\end{document}